\newtheorem{theorem}{Theorem}[section]
\newtheorem{proposition}[theorem]{Proposition}
\newtheorem{lemma}[theorem]{Lemma}
\theoremstyle{definition}
\newtheorem{definition}[theorem]{Definition}
\newtheorem{remark}[theorem]{Remark}
\newcommand{\Mg}[1]{\mathcal{M}_{g,#1}}
\newcommand{\Mgn}{\mathcal{M}_{g,n}}
\newcommand{\Mgnbar}{\overline{\mathcal{M}}_{g,n}}
\newcommand{\Mgbar}[1]{\overline{\mathcal{M}}_{g,#1}}
\newcommand{\Mbar}[1]{\overline{\mathcal{M}}_{#1}}
\renewcommand{\d}{\underline{d}}
\newcommand{\C}{\mathcal{C}}
\newcommand{\D}{\overline{D}}
\newcommand{\Dd}{\D_{\d}}
\newcommand{\Dscr}{\mathscr{D}}
\newcommand{\E}{\mathbb{E}}
\newcommand{\Kscr}{\mathscr{K}}
\renewcommand{\L}{\mathscr{L}}
\renewcommand{\O}{\mathcal{O}}
\renewcommand{\P}{\mathbb{P}}
\newcommand{\Q}{\mathbb{Q}}
\newcommand{\U}{\mathcal{U}}
\newcommand{\Wg}{\overline{\mathcal{W}}_g}
\renewcommand{\phi}{\varphi}
\newcommand{\abs}[1]{\left| #1 \right|}
\newcommand{\linsys}[1]{\big| #1 \big|}
\DeclareMathOperator{\ch}{ch}
\DeclareMathOperator{\td}{td}
\DeclareMathOperator{\ord}{ord}
\DeclareMathOperator{\Pic}{Pic}
\title{The pullback of a Theta divisor to $\Mgnbar$}
\author{Fabian M\"uller}
\address{Humboldt-Universit\"at zu Berlin, Institut f\"ur Mathematik, 10099
Berlin}
\email{muellerf@math.hu-berlin.de}
\begin{document}

\begin{abstract}
We compute the class of a divisor on $\Mgnbar$ given as the closure of the
locus of smooth pointed curves $\left[ C;\, x_1,\, \dots,\, x_n \right]$ for
which $\sum d_j x_j$ has an effective representative, where $d_j$ are
integers summing up to $g-1$, not all positive. The techniques used are a
vector bundle computation, a pushdown argument reducing the number of marked
points, and the method of test curves.
\end{abstract}

\maketitle

\section{Introduction}
\label{sec:introduction}
It has long been known classically that if $C$ is a smooth curve of genus
$g \geq 2$ and $C_{g-1}$ denotes its $(g-1)$-fold symmetric product, the
Abelian sum map $C_{g-1} \to \Pic^{g-1}(C)$, which to $g-1$ unordered points
$x_1,\, \dots,\, x_{g-1}$ associates the line bundle $\O_C(x_1 + \dots +
x_{g-1})$, has as image a divisor, which becomes a theta divisor under an
identification of $\Pic^{g-1}(C)$ with the Jacobian of $C$. This result can be
globalized to a map $\C_{g,g-1} \to \Pic_g^{g-1}$, where
\begin{equation*}
\C_{g,g-1} = \left( \Mg{1} \times_{\mathcal{M}_g} \dots \times_{\mathcal{M}_g}
\Mg{1} \right) \big/ S_{g-1}
\end{equation*}
is the $(g-1)$-fold symmetric product of the universal curve, and
$\Pic_g^{g-1}$ is the universal Picard variety of degree $g-1$. The image of
this map is again a divisor, which we denote by $\Theta_g$. Given an integer
vector $\d = (d_1,\, \dots,\, d_n) \in \mathbb{Z}^n$ satisfying $\sum_{j=1}^n
d_j = g - 1$, we can define a map $\phi_{\d} \negthinspace:\, \Mgn \to
\Pic_g^{g-1}$ by associating to a pointed curve $\left[ C;\, x_1,\, \dots,\,
x_n \right]$ the line bundle $\O_C(d_1 x_1 + \dots + d_n x_n)$ on $C$. If at
least one of the $d_j$ is negative the image of $\phi_{\d}$ is not contained
in $\Theta_g$, and we can ask what is the class of the pullback $D_{\d} :=
\phi_{\d}^* \Theta_g$ and its closure on $\Mgnbar$. Unraveling the concepts
involved, we arrive at the following equivalent definition:

\begin{definition}
Let $\d = (d_1,\, \dots,\, d_n)$ be an $n$-tuple of integers satisfying
$\sum_{j=1}^n d_j = g - 1$, with at least one $d_j$ negative. Denote by
\begin{equation*}
D_{\d} := \left\{ \left[ C;\, x_1,\, \dots,\, x_n \right] \in \Mgn \,\Big|\,
h^0\left(C,\, d_1 x_1 + \dots + d_n x_n \right) \geq 1 \right\},
\end{equation*}
which is a divisor on $\Mgn$, and let $\Dd$ be its closure in $\Mgnbar$.
\end{definition}

Note that since the $x_j$ are distinct, the condition $h^0\left(C,\, d_1 x_1 +
\dots + d_n x_n \right) \geq 1$ is equivalent to postulating that there is a
pencil of degree $d_{S_+} := \sum_{j:d_j>0} d_j$ on $C$ that contains the
divisor $\sum_{j: d_j > 0} d_j x_j$ and has a section that vanishes to order
$-d_j$ at $x_j$ for all $j \in S_- := \big\{ j \,\big|\, d_j < 0 \big\}$. As
it ties in nicely with the limit linear series characterization on reducible
curves, we will always use this reformulation from now on.

The main result of this paper, which is proven in Theorem \ref{thm:class_Dd},
is the computation of the class of this divisor in $\Pic(\Mgnbar)$. It is
given by
\begin{equation}
\label{eq:class_Dd}
\begin{split}
\left[ \Dd \right] =& -\lambda + \sum_{j=1}^n \binom{d_j + 1}{2} \psi_j -
0 \cdot \delta_0 \\
& - \sum_{\substack{i,\, S\\ S \subseteq S_+}}
\binom{\abs{d_S - i} + 1}{2} \delta_{i:S} - \sum_{\substack{i,\, S\\ S
\not\subseteq S_+}} \binom{d_S - i + 1}{2} \delta_{i:S},
\end{split}
\end{equation}
where $S_+ := \big\{ j \,\big|\, d_j > 0 \big\}$ and $d_S := \sum_{j \in S}
d_j$. Thus the next to last summand corresponds to boundary classes that
parameterize reducible curves where the points indexed by $S_-$ lie on a
single component, while the last one corresponds to classes parameterizing
curves which have points from $S_-$ on both components.

In the special case $\d = (d_1,\, \dots,\, d_{n-1};\, -1)$ with $d_1,\,
\dots,\, d_{n-1} > 0$, the divisor $\Dd$ is just the pullback to $\Mgnbar$ of
the divisor of pointed curves $\left[ C;\, x_1,\, \dots,\, x_{n-1} \right] \in
\Mgbar{n-1}$ having a $g^1_g$ containing $d_1 x_1 + \dots + d_{n-1} x_{n-1}$,
which was considered by A.~Logan \cite{bib:logan}. For $n = 2$, it is the
pullback of the Weierstra\ss{} divisor on $\Mgbar{1}$, whose class has been
computed by F.~Cukierman \cite{bib:cukierman} to be
\begin{equation}
\label{eq:weierstrass_divisor_class}
\left[ \Wg \right] = -\lambda + \binom{g + 1}{2} \psi_1 - \sum_{i=1}^{g-1}
\binom{g - i + 1}{2} \delta_{i:1}.
\end{equation}
For more details on this, see Remarks \ref{rmk:weierstrass_divisor} and
\ref{rmk:logan_divisor}.

A divisor similar to $\Dd$ was studied by R.~Hain \cite{bib:hain}: On an open
subset $U$ of $\Mgnbar$ (or a covering of such) where there is a globally
defined theta characteristic $\alpha$, one can define a morphism $\phi_{\d}'
\negthinspace:\, U \to \Pic_g^0$ mapping a pointed curve $\left[ C;\, x_1,\,
\dots,\, x_n \right]$ to the line bundle $\O_C(d_1 x_1 + \dots + d_n x_n -
\alpha) \in \Pic^0(C)$. The class of the closure in $\Mgnbar$ of the pullback
$D_{\d}' := \big(\phi_{\d}' \big)^* \Theta_\alpha$ is computed in
\cite[Theorem 11.7]{bib:hain}; expressed in our notation it is
\begin{equation*}
\left[ \Dd' \right] = -\lambda + \sum_{j=1}^n \binom{d_j + 1}{2} \psi_j +
\delta_0/8 - \sum_{i,S} \binom{d_S - i + 1}{2} \delta_{i:S} \in \Pic(\Mgnbar)
\otimes \Q.
\end{equation*}
Both this result and our Theorem \ref{thm:class_Dd} are reproven in a recent
preprint by S.~Grushevsky and D.~Zakharov \cite[Theorem
6]{bib:grushevsky-zakharov}, where it is also shown that the divisor
considered by Hain is reducible and decomposes as $\Dd$ together with some
boundary components, with multiplicities according to the generic vanishing
order of the theta function.

This paper is organized as follows: In Section \ref{sec:preliminaries} we will
collect some results on pullbacks and pushforwards of divisors on $\Mgnbar$
that we will need during the course of the paper. In Section
\ref{sec:main_coefficients} the coefficients of the $\lambda$ and $\psi_j$
classes in the expression for $\left[ \Dd \right]$ are computed by a vector
bundle technique. The rest of the coefficients are computed via test curves.
The actual test curve computations are done in Section \ref{sec:test_curves},
and the results are applied in Section \ref{sec:boundary} together with a
pushdown technique to finish the proof of the main result.

\subsection*{Notation}
By a \emph{nodal curve}, we shall mean a reduced connected 1-dimensional
scheme of finite type over a field $k$ whose only singularities are ordinary
nodes. A nodal curve is said to be of \emph{compact type} if its dual graph
is a tree, or equivalently if its Jacobian is compact.

We use the shorthand $[n] := \{ 1,\, \dots,\, n \}$. If $a$ is any
expression, we write $(a)_+ := \max(a,\, 0)$. Occasionally we will write down
a binomial coefficient $\binom{a}{2}$ with $a < 0$, by which we just mean $a
(a - 1) / 2$.

If $\d = (d_1,\, \dots,\, d_n)$ is an $n$-tuple of integers, we write $S_+$
(resp. $S_-$) for the set of indices $j \in [n]$ with $d_j > 0$ (resp. $d_j <
0$). Moreover, if $S \subseteq [n]$ is an arbitrary set of indices, we write
$d_S := \sum_{j \in S} d_j$. When convenient, we will assume that the
positive $d_j$ come first and in the notation $\Dd$ separate them with a
semicolon from the negative ones.

When summing over boundary classes $\delta_{i:S}$ in $\Pic(\Mgnbar)$, the
summation range $\sum_{i,S}$ (and obvious analogues) will be implicitly taken
to involve only admissible combinations (e.~g. $\abs{S} \geq 2$ for $i = 0$)
and to contain every divisor only once (e.~g. by postulating $1 \in S$ or $i
\leq g/2$). By $\pi_n\negthickspace: \Mgnbar \to \Mgbar{n-1}$ we will denote
the forgetful map which forgets the $n$-th point, while by $\pi_{(jk \mapsto
\bullet)}$ we mean the map which identifies the divisor $\Delta_{0:jk}
\subseteq \Mgnbar$ with $\Mgbar{n-1}$ by removing the rational component and
introducing the new marking $\bullet$ for the former point of attachment.
By $\pi\negthickspace: \mathcal{M}_{g,1} \times_{\mathcal{M}_g} \Mgn =: \U
\to \Mgn$ we denote the universal family over $\Mgn$ with sections
$\sigma_1,\, \dots,\, \sigma_n \negthinspace: \Mgn \to \U$, and by $\omega_\pi
\in \Pic(\U/\Mgn)$ the relative dualizing sheaf of the map $\pi$. Picard
groups are always understood in the functorial sense, i.~e. as groups of
divisor classes on the moduli stack.

\subsection*{Acknowledgements}
This work is part of my PhD thesis. I am very grateful to my advisor Gavril
Farkas for many helpful discussions and comments. My thanks also go to the
referee for a detailed reading and numerous suggestions for improvement.
I am supported by the DFG Priority Project SPP 1489.

\section{Preliminaries}
\label{sec:preliminaries}

\subsection{The Picard group of $\Mgnbar$}
\label{ssec:picard_group}
We quickly recall the well-known description of $\Pic(\Mgnbar)$. The
pushforward $\mathbb{E} := \pi_* \omega_\pi$ of $\omega_\pi$ to $\Mgn$ is
called the \emph{Hodge bundle}. It is a vector bundle of rank $g$ whose
determinant line bundle is denoted by $\lambda := \bigwedge^g \E$. For $j =
1,\, \dots,\, n$, the pullback of $\omega_\pi$ via the section $\sigma_j$ is
denoted by $\psi_j := \sigma_j^* \omega_\pi$. Moreover, we denote by
$\delta_0$ the line bundle corresponding to irreducible nodal pointed
stable curves, and by $\delta_{i:S}$ the one corresponding to pointed stable
curves consisting of two components of genera $i$ and $g-i$ that meet at a
node, with the marked points indexed by $S$ lying on the former. In
\cite{bib:arbarello-cornalba-picard-groups} it is proven that $\Pic(\Mgnbar)$
is freely generated by $\lambda$, the $\psi_j$, $\delta_0$ and the
$\delta_{i:S}$.

\subsection{Limit linear series}
Throughout this paper, we will make extensive use of the theory of limit
linear series, as first developed by Eisenbud and Harris
\cite{bib:eisenbud-harris-lls}. Here we briefly recall the most important
concepts and results. Recall that a \emph{linear series} of degree $d$ and
dimension $r$ on a smooth curve $C$ (in short, a $g^r_d$) is given by a pair
$\ell = (\L,\, V)$, where $\L$ is a line bundle of degree $d$ on $C$ and $V
\subseteq H^0(C,\, \L)$ is a subspace of projective dimension $r$. The
\emph{vanishing sequence} $a^\ell(p) = (0 \leq a^\ell_0(p) < \dots <
a^\ell_r(p) \leq d)$ of $\ell$ at a point $p \in C$ is the set
$\left\{ \ord_p(\sigma) \,\big|\, \sigma \in V \right\}$ of vanishing orders
of sections of $\ell$, ordered ascendingly.

\begin{definition}
Let $C$ be a nodal curve of compact type with irreducible components $C_1,\,
\dots,\, C_s$ and $r$, $d$ natural numbers. A \emph{limit $g^r_d$} on $C$ is a
collection $\ell$ of linear series $\ell_i = (\L_i,\, V_i)$ of degree $d$ and
dimension $r$ on each component $C_i$, satisfying the compatibility conditions
\begin{equation*}
a^{\ell_i}_m(\nu) + a^{\ell_j}_{r-m}(\nu) \geq d,\qquad m = 0,\,
\dots,\, r
\end{equation*}
for each node $\nu$ at which the components $C_i$ and $C_j$ meet. The
$\ell_i$ are called the \emph{aspects} of $\ell$. A \emph{section} of $\ell$
is a collection $\sigma = (\sigma_1,\, \dots,\, \sigma_s)$ of sections
$\sigma_i \in V_i$ satisfying
the compatibility conditions
\begin{equation*}
\ord_\nu(\sigma_i) + \ord_\nu(\sigma_j) \geq d,\qquad m = 0,\, \dots,\, r
\end{equation*}
for each node $\nu$ at which $C_i$ and $C_j$ meet.
If $p \in C$ is a smooth point, the \emph{vanishing sequence} of $\ell$ at
$p$ and the \emph{vanishing order} of a section $\sigma$ of $\ell$ at $p$ are
respectively defined to be $a^\ell(p) := a^{\ell_i}(p)$ and $\ord_p(\sigma) :=
\ord_p(\sigma_i)$, where $C_i$ is the component of $C$ on which $p$ lies.
\end{definition}

The usefulness of the concept of limit linear series lies in the fact that
they are indeed limits of linear series: By \cite[Section
2]{bib:eisenbud-harris-lls}, if a nodal curve of compact type lies in the
closure of the locus of curves admitting a $g^r_d$, then it
admits a limit $g^r_d$, and for $r = 1$ the converse is also true (see
\cite[Proposition 3.1]{bib:eisenbud-harris-lls}). This result remains
true even if one prescribes fixed vanishing sequences at points specializing
to smooth points on the nodal curve.

We finally recall two well-known facts about linear series on curves: a
generic curve $C$ of genus $g$ has a $g^r_d$ if and only if the
\emph{Brill-Noether-number}
\begin{equation*}
\rho(g,\, r,\, d) = g - (r + 1) (g - d + r)
\end{equation*}
is non-negative, and postulating a vanishing sequence $a = (a_0,\, \dots,\,
a_r)$ at a generic point of $C$ imposes $\sum_{i=0}^r (a_i - i)$ conditions on
the space of $g^r_d$'s on $C$.

\subsection{Pushforward and pullback formulas}
For computing pullbacks of divisor classes, we need the following formulas,
which can be found in \cite[p. 161]{bib:arbarello-cornalba-picard-groups}:
\begin{lemma}
\label{lem:pullback_forgetful}
If $\pi_n: \Mgnbar \to \Mgbar{n-1}$ is the forgetful map forgetting the last
point, then we have the following formulas for pullbacks of divisor classes:
\begin{enumerate}
\item $\pi_n^* \lambda = \lambda,$
\item $\pi_n^* \psi_j = \psi_j - \delta_{0:jn},$
\item $\pi_n^* \delta_0 = \delta_0,$
\item $\pi_n^* \delta_{i:S} = \delta_{i:S} + \delta_{i:S \cup \{ n \}},$
except that $\pi_1^* \delta_{g/2:\emptyset} = \delta_{g/2:\emptyset}$.
\end{enumerate}
\end{lemma}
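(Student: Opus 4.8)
The plan is to exploit the identification of the forgetful morphism $\pi_n$ with the universal curve $p\negthinspace: \mathcal{C}_{g,n-1} \to \Mgbar{n-1}$, under which $\Mgnbar \cong \mathcal{C}_{g,n-1}$ and the $n$-th marked point becomes the tautological point of the fiber. A point of $\Mgnbar$ lying over $[C;\, x_1,\, \dots,\, x_{n-1}]$ then records a position for $x_n$ on a semistable model of $C$, and stabilization introduces a rational bridge exactly when $x_n$ collides with one of the $x_j$ or runs into a node. Each of the four formulas is obtained by tracing a naturally defined line bundle through this picture and measuring the discrepancy supported on the exceptional loci; throughout I would phrase the arguments in terms of families over an arbitrary base, so that the identities hold in the functorial Picard group.

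Formulas (i) and (iii) I expect to be immediate. Since the Hodge bundle $\E = p_* \omega_p$ is pulled back from $\Mgbar{n-1}$ --- the space of global sections of the dualizing sheaf is insensitive to the marked points --- its determinant $\lambda$ satisfies $\pi_n^* \lambda = \lambda$. Likewise, forgetting a point neither creates nor destroys the non-separating node of an irreducible one-nodal curve, and $\pi_n$ restricts to a degree-one map of the corresponding boundary divisors, so $\pi_n^* \delta_0 = \delta_0$ with no correction term.

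The heart of the argument is formula (ii), and this is where I expect the main difficulty. The goal is to compare the two cotangent-line bundles at $x_j$: the one on $\Mgnbar$ and the one pulled back from $\Mgbar{n-1}$. Geometrically, the section $\sigma_j\negthinspace: \Mgbar{n-1} \to \mathcal{C}_{g,n-1} = \Mgnbar$ has image the boundary divisor $\Delta_{0:jn}$, and the two cotangent lines agree away from this locus. The standard comparison-of-cotangent-lines argument --- relating $\omega_{\pi_n}$ restricted along a section to the conormal bundle of that section, via the adjunction sequence on the total space $\mathcal{C}_{g,n-1}$ --- yields that the discrepancy is exactly $\delta_{0:jn}$, giving $\pi_n^* \psi_j = \psi_j - \delta_{0:jn}$. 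Pinning the multiplicity to be precisely one, rather than some other integer, is the delicate point and requires care with the local geometry of the bubbling at $x_j = x_n$.

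Finally, for (iv) I would compute the set-theoretic preimage of $\Delta_{i:S}$ and verify that all multiplicities are one. A stable curve in $\Delta_{i:S} \subseteq \Mgbar{n-1}$ acquires the $n$-th point either on its genus-$i$ component, contributing $\Delta_{i:S \cup \{ n \}}$, or on its genus-$(g-i)$ component, contributing $\Delta_{i:S}$; since $\pi_n$ is unramified along each, no multiplicities intervene and $\pi_n^* \delta_{i:S} = \delta_{i:S} + \delta_{i:S \cup \{ n \}}$. The lone exception $\pi_1^* \delta_{g/2:\emptyset} = \delta_{g/2:\emptyset}$ arises because for $i = g/2$ and $S = \emptyset$ the two components are interchangeable: the generic involution swapping them is broken once the $n$-th point is added, and keeping track of this $\mathbb{Z}/2$ on the level of stacks suppresses the second term.
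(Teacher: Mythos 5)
The paper itself offers no proof of this lemma --- it is quoted from Arbarello--Cornalba --- so the comparison is really with the classical argument from that reference, and your outline (Knudsen's identification of $\pi_n$ with the universal curve, then bookkeeping along the loci where stabilization contracts a bubble) is indeed that argument; parts (i) and (iii) are fine at this level of detail. The first genuine gap is in (ii): you yourself call the multiplicity ``the delicate point'' and then stop exactly there, but that multiplicity \emph{is} the entire content of the formula. To close it: away from $\Delta_{0:jn}$, forgetting $x_n$ changes nothing in a neighborhood of $x_j$, so the two cotangent lines are canonically isomorphic there; since $\Delta_{0:jn}$ is irreducible and $\Pic(\Mgnbar)$ is freely generated by the classes listed in Section \ref{ssec:picard_group}, this gives $\psi_j - \pi_n^*\psi_j = c\,\delta_{0:jn}$ for some integer $c$. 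Now restrict to $\Delta_{0:jn} \cong \Mgbar{n-1}$: there $\psi_j$ becomes the cotangent line at a marked point of the rigid $3$-pointed bubble $(\P^1;\, x_j,\, x_n,\, \text{node})$, hence restricts to $0$; the restriction of $\pi_n^*\psi_j$ is $\psi_\bullet$; and $\O(\Delta_{0:jn})\big|_{\Delta_{0:jn}} = -\psi_\bullet$, which is item (iv) of Lemma \ref{lem:pushforward_forgetful}. So $0 = \psi_\bullet - c\,\psi_\bullet$, and since $\psi_\bullet \neq 0$ by freeness, $c = 1$.

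The second gap is your explanation of the exception $\pi_1^*\delta_{g/2:\emptyset} = \delta_{g/2:\emptyset}$, whose mechanism is wrong as stated. A generic curve in $\Delta_{g/2} \subset \Mbar{g}$ is $C_1 \cup_p C_2$ with $C_1 \not\cong C_2$, so there is no ``generic involution swapping'' the two sides and nothing stacky being suppressed: the generic point admits no automorphism interchanging the components. The true phenomenon is that the two loci you would like to call $\Delta_{g/2:\emptyset}$ and $\Delta_{g/2:\{1\}}$ are one and the same irreducible divisor of $\Mgbar{1}$, because monodromy inside $\Delta_{g/2}$ interchanges the two branches (the gluing map $\overline{\mathcal{M}}_{g/2,1} \times \overline{\mathcal{M}}_{g/2,1} \to \Delta_{g/2}$ is generically $2:1$); thus the preimage is irreducible, and the question your argument must --- but does not --- settle is why the coefficient is $1$ rather than $2$. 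That is a multiplicity computation: take a one-parameter family $B \to \Mbar{g}$ transverse to $\Delta_{g/2}$ at a generic point, with total space $\mathcal{X}$; then $B \times_{\Mbar{g}} \Mgbar{1} = \mathcal{X}$, and the pullback to $\mathcal{X}$ of the local equation of $\Delta_{g/2}$ (the node-smoothing parameter) is the coordinate on $B$, which vanishes to order exactly one along the fiber $C_1 \cup C_2$, i.e.\ along the preimage divisor; hence coefficient $1$. The same local computation, not ``unramifiedness'' of $\pi_n$ (which is a family of curves, so that word does not apply), is what justifies the multiplicities one in the generic case of (iv).
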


To apply the Grothendieck-Riemann-Roch formula in Section
\ref{sec:main_coefficients}, we need certain formulas for pushforwards of
intersections of cycles on the universal family, which can be found for
example in \cite[Lemma 3.13]{bib:farkas-mustata-popa}. We reproduce the ones
that concern us here:
\begin{lemma}
\label{lem:pushforward_universal}
With notation as given in Section \ref{sec:introduction},
\begin{enumerate}
\item $\pi_* \big( c_1(\omega_\pi)^2 \big) = 12 \lambda$,
\item $\pi_* \big( c_1(\omega_\pi) c_1(\sigma_j) \big) = \psi_j$, and
\item $\pi_* \big( c_1(\sigma_j)^2 \big) = -\psi_j$.
\end{enumerate}
\end{lemma}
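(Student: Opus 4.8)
The three identities fall into two quite different categories, so I would treat them separately. Identity (i) is the relative Grothendieck--Riemann--Roch computation originally due to Mumford, whereas (ii) and (iii) follow formally from the fact that each $\sigma_j$ is a section of $\pi$, via the projection formula and the standard identification of the normal bundle of a section. Throughout it is worth emphasizing that $\U \to \Mgn$ is a \emph{smooth} family of curves over the open moduli space, so there is no node locus and no boundary correction to worry about.

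For (i) the plan is to apply GRR to the map $\pi$ and the sheaf $\omega_\pi$. Since $\pi$ is smooth of relative dimension one, the relative tangent sheaf is the line bundle $\omega_\pi^{\vee}$, and its Todd class expands as $\td(\omega_\pi^{\vee}) = 1 - \tfrac{1}{2} c_1(\omega_\pi) + \tfrac{1}{12} c_1(\omega_\pi)^2 + \dots$. Multiplying by $\ch(\omega_\pi) = \exp\bigl(c_1(\omega_\pi)\bigr)$ and isolating the degree-two part, which comes out to $\tfrac{1}{12} c_1(\omega_\pi)^2$, I get $\ch_1(\pi_! \omega_\pi) = \tfrac{1}{12} \pi_*\bigl(c_1(\omega_\pi)^2\bigr)$. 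On the other side, relative duality identifies $R^1\pi_* \omega_\pi$ with the structure sheaf $\O$, so that $\pi_! \omega_\pi = \E - \O$ in $K$-theory and $\ch_1(\pi_! \omega_\pi) = c_1(\E) = \lambda$. Comparing the two expressions yields $\pi_*\bigl(c_1(\omega_\pi)^2\bigr) = 12\lambda$, with no $-\delta$ term precisely because we are over the smooth locus.

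For (ii) and (iii) the key observation is that $\pi \circ \sigma_j = \mathrm{id}$, so for any class $\alpha$ on $\U$ the projection formula gives $\pi_*\bigl(c_1(\sigma_j) \cdot \alpha\bigr) = \pi_* \sigma_{j*}(\sigma_j^* \alpha) = \sigma_j^* \alpha$, where $c_1(\sigma_j)$ denotes the class of the image of the section. Taking $\alpha = c_1(\omega_\pi)$ and invoking the very definition $\psi_j = \sigma_j^* \omega_\pi$ gives (ii) at once. Taking $\alpha = c_1(\sigma_j)$ instead reduces the computation to the self-intersection $\sigma_j^* c_1(\sigma_j) = c_1(N_{\sigma_j})$, the first Chern class of the normal bundle of the section; since $\sigma_j$ is a section of a smooth family, this normal bundle is canonically $\sigma_j^* \omega_\pi^{\vee} = \psi_j^{\vee}$, which gives (iii).

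The only genuinely computational step is the GRR bookkeeping in (i), and there the main thing to keep straight is exactly the point noted above: over $\Mgn$ the nodal correction in the Todd class of the relative tangent sheaf vanishes, producing the clean answer $12\lambda$ rather than $12\lambda - \delta$. For (ii) and (iii) all the real content sits in the identification of the normal bundle of a section with the restricted relative tangent sheaf, which is standard; granting that, both formulas are immediate from the projection formula, and I would not expect any obstacle beyond carefully tracking the sign coming from $\omega_\pi^{\vee}$.
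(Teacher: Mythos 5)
Your proof is correct. In fact the paper gives no proof of this lemma at all: the formulas are simply quoted from \cite[Lemma 3.13]{bib:farkas-mustata-popa}, so your write-up supplies the standard argument that this citation encapsulates. For (i) this is Mumford's relative Grothendieck--Riemann--Roch computation, which you carry out correctly: since $\pi$ is smooth over $\Mgn$, the relative tangent sheaf is the line bundle $\omega_\pi^\vee$, the degree-two term of $\ch(\omega_\pi)\td\big(\omega_\pi^\vee\big)$ is $\tfrac{1}{12}c_1(\omega_\pi)^2$, and relative duality gives $R^1\pi_*\omega_\pi \cong \O$, hence $\ch_1(\pi_!\omega_\pi) = c_1(\E) = \lambda$; comparing the two sides yields $\pi_*\big(c_1(\omega_\pi)^2\big) = 12\lambda$. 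For (ii) and (iii), the projection formula $\pi_*\big(c_1(\sigma_j)\cdot\alpha\big) = \sigma_j^*\alpha$ (valid because $\pi\circ\sigma_j = \mathrm{id}$) together with the identification $N_{\sigma_j} \cong \sigma_j^*\omega_\pi^\vee$ of the normal bundle of a section is exactly the standard route, and $\psi_j = \sigma_j^*\omega_\pi$ is the paper's definition, so both formulas follow immediately. Your emphasis on smoothness of the family $\U \to \Mgn$ --- which is why no nodal correction appears and one gets $12\lambda$ rather than the $12\lambda - \delta$ familiar from the compactified setting --- is the one point where care is genuinely needed, and you handled it correctly.
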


In order to be able to apply a pushdown technique in Section
\ref{sec:boundary}, we also need various formulas for pushforwards of
intersections of basis divisor classes via the map $\pi_{(jk \mapsto
\bullet)}$ which identifies the divisor $\Delta_{0:jk}$ with $\Mgbar{n-1}$.
They can be found in a table in \cite[Theorem 2.8]{bib:logan}; we list the
relevant ones here:
\begin{lemma}
\label{lem:pushforward_forgetful}
The following formulas for pushforwards of intersection cycles hold:
\begin{enumerate}
\item $\mathrlap{\pi_{(1n \mapsto \bullet)*}(\lambda \cdot \delta_{0:1n})}
\phantom{\pi_{(1n \mapsto \bullet)*}(\delta_{i:S} \cdot \delta_{0:1n})} =
\lambda,$
\item $\mathrlap{\pi_{(1n \mapsto \bullet)*}(\psi_j \cdot \delta_{0:1n})}
\phantom{\pi_{(1n \mapsto \bullet)*}(\delta_{i:S} \cdot \delta_{0:1n})} =
\begin{cases}
0 & \text{for $j = 1,\, n$,}\\ \psi_j & \text{for $j = 2,\, \dots,\, n - 1$,}
\end{cases}$
\item $\mathrlap{\pi_{(1n \mapsto \bullet)*}(\delta_0 \cdot \delta_{0:1n})}
\phantom{\pi_{(1n \mapsto \bullet)*}(\delta_{i:S} \cdot \delta_{0:1n})} =
\delta_0,$
\item $\mathrlap{\pi_{(1n \mapsto \bullet)*}(\delta_{0:1n}^2)}
\phantom{\pi_{(1n \mapsto \bullet)*}(\delta_{i:S} \cdot \delta_{0:1n})} =
-\psi_\bullet,$
\item $\pi_{(1n \mapsto \bullet)*}(\delta_{i:S} \cdot \delta_{0:1n}) =
\begin{cases} \delta_{i:S} & \text{if $1,\, n \not\in S$,}\\
\delta_{i:S'} & \text{if $1,\, n \in S$,}\\
0 & \text{if $1 \in S,\, n \notin S$ or $1 \notin S,\, n \in S$,}
\end{cases}$\\ where $S' := \big( S \setminus \{ 1,\, n \} \big) \cup \{
\bullet \}$.
\end{enumerate}
\end{lemma}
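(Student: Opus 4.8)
The plan is to exploit the fact that $\pi_{(1n \mapsto \bullet)}$ restricts to an \emph{isomorphism} of the boundary divisor $\Delta_{0:1n}$ onto $\Mgbar{n-1}$, with inverse the map that glues a three-pointed rational bridge carrying the markings $1$ and $n$ onto the genus-$g$ curve at the point $\bullet$. Writing $i\negthinspace: \Delta_{0:1n} \hookrightarrow \Mgnbar$ for the inclusion, we have $\delta_{0:1n} = i_* 1$, so by the projection formula every product $\alpha \cdot \delta_{0:1n}$ equals $i_*(i^* \alpha)$. Hence each pushforward in the statement is obtained by restricting the relevant generator $\alpha$ to $\Delta_{0:1n}$ and then transporting the result to $\Mgbar{n-1}$ along the isomorphism. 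Thus the entire computation reduces to determining the restrictions $i^* \lambda$, $i^* \psi_j$, $i^* \delta_0$, $i^* \delta_{0:1n}$ and $i^* \delta_{i:S}$.

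First I would dispose of the easy generators using their modular meaning. The Hodge bundle is insensitive to rational tails, since a genus-$0$ component contributes nothing to $H^0(\omega)$; therefore $i^* \lambda$ is identified with $\lambda$ on $\Mgbar{n-1}$, giving (i). The rational bridge is a fixed three-pointed $\P^1$, hence rigid, so the cotangent lines at the markings $1$ and $n$ sitting on it are trivial: $i^* \psi_1 = i^* \psi_n = 0$, while for $j \in \{2, \dots, n-1\}$ the point remains on the genus-$g$ component and $i^* \psi_j = \psi_j$, which yields (ii). Likewise a non-separating node can only appear on the genus-$g$ part, so $i^* \delta_0 = \delta_0$ and (iii) follows.

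For the self-intersection (iv) I would invoke the standard description of the normal bundle of a boundary divisor along which a node is formed: it is the tensor product of the two tangent lines at the node, so that $i^* \delta_{0:1n}$ equals minus the sum of the two corresponding $\psi$-classes. On the rigid $\P^1$ side this $\psi$-class vanishes, while on the genus-$g$ side the node is precisely the point that becomes the marking $\bullet$; hence $i^* \delta_{0:1n} = -\psi_\bullet$, and pushing forward gives $-\psi_\bullet$.

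The formula (v) requires the most care and is the true obstacle, since it involves bookkeeping of the marked points together with a transversality check. Under the inverse isomorphism a boundary point $[C'] \in \Delta_{i:T} \subseteq \Mgbar{n-1}$ is sent to the curve obtained by attaching the bridge (carrying $1$ and $n$) at $\bullet$; the bridge therefore lies on the genus-$i$ side precisely when $\bullet \in T$. Because $1$ and $n$ always travel together on the bridge, they end up on the same side of the node, so a point of $\Delta_{i:S}$ with exactly one of $1, n$ in $S$ can never arise and that restriction is empty, giving $0$. If $1, n \notin S$ the bridge must sit on the genus-$(g-i)$ side, i.e. $\bullet \notin T$ and $S = T \subseteq \{2, \dots, n-1\}$, so $i^* \delta_{i:S}$ pushes forward to $\delta_{i:S}$; if $1, n \in S$ the bridge sits on the genus-$i$ side, i.e. $\bullet \in T$ and $T = (S \setminus \{1,n\}) \cup \{\bullet\} = S'$, so the pushforward is $\delta_{i:S'}$. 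The remaining point to check is that these intersections are generically transverse, which holds because $\Delta_{i:S}$ and $\Delta_{0:1n}$ meet along the expected locus of curves carrying both degenerations independently; this completes the verification.
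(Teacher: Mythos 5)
Your proof is correct, but it follows a genuinely different route from the paper, which in fact gives no argument for this lemma at all: the formulas are simply quoted from the table in \cite[Theorem 2.8]{bib:logan}. What you wrote out is essentially the standard derivation underlying that table: identify $\Delta_{0:1n}$ with $\Mgbar{n-1}$ via the gluing map (the paper's notation section already takes this identification as given), convert each pushforward into $i_*(i^*\alpha)$ by the projection formula, and compute the restrictions of the generators --- $i^*\lambda = \lambda$ since a rational tail contributes nothing to the Hodge bundle, $i^*\psi_1 = i^*\psi_n = 0$ by rigidity of the three-pointed bridge, $i^*\delta_0 = \delta_0$, $i^*\delta_{0:1n} = -\psi_\bullet$ from the node-smoothing description of the normal bundle of a boundary divisor, and the three-way case analysis for $i^*\delta_{i:S}$. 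Two of your steps lean on standard facts that deserve to be named explicitly: the ``generic transversality'' in (v) is precisely the normal-crossings property of the boundary of $\Mgnbar$ (distinct boundary divisors meet transversally, so every restriction class occurs with multiplicity one), and the vanishing case of (v) is really a set-theoretic emptiness statement, $\Delta_{i:S} \cap \Delta_{0:1n} = \emptyset$ whenever exactly one of $1,\, n$ lies in $S$. Your phrase that $1$ and $n$ ``always travel together'' is the right idea, but the clean justification is a short dual-graph check: any separating node of a curve in $\Delta_{0:1n}$ either equals the node attaching the genus-zero piece carrying $1$ and $n$, or lies entirely on one side of it, and in each of the resulting cases neither side of that node can have genus $i$ together with marking set exactly $S$. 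As for what each approach buys: the paper's citation is economical and defers all verification to Logan, while your argument makes the lemma self-contained, exhibits the modular meaning of each formula, and visibly transfers to $\pi_{(jk \mapsto \bullet)}$ for an arbitrary pair $j,\, k$, which is how the paper actually uses these formulas after invoking the $S_n$-action.
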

The corresponding formulas for the pushforwards of intersections of divisors
with other boundary divisor classes of the form $\delta_{0:jk}$ can easily be
obtained from Lemma \ref{lem:pushforward_forgetful} by applying the
$S_n$-action permuting the points on $\Mgnbar$. Note that when we take out the
basis elements of $\Pic(\Mgnbar)$ that get mapped to $0$ in the above
formulas, the map $\alpha \mapsto \pi_{(1n \mapsto \bullet)*}(\alpha \cdot
\delta_{0:1n})$ is injective on the span of the remaining basis elements, a
fact we will make use of in Section \ref{sec:boundary} (see Remark
\ref{rmk:pushdown}).

Finally, for applying the pushdown technique we also need to know how the
divisor $\Dd$ behaves under intersection and pushforward:
\begin{lemma}
\label{lem:pushforward_Dd}
If $j,\, k \in [n]$ are two indices such that $d_j$ and $d_k$ have the same
sign, then
\begin{equation*}
\pi_{(jk \mapsto \bullet)*}(\Dd \cdot \delta_{0:jk}) = \D_{\d'},
\end{equation*}
where $\d' = (d_1,\, \dots,\, \widehat{d_j},\, \dots,\, \widehat{d_k},\,
\dots,\, d_n,\, d_\bullet = d_j + d_k)$.
\end{lemma}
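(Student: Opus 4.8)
The plan is to verify the identity $\pi_{(jk \mapsto \bullet)*}(\Dd \cdot \delta_{0:jk}) = \D_{\d'}$ by working directly with the geometric description of the divisors in terms of limit linear series, rather than with their explicit classes (indeed, this lemma is one of the tools used to \emph{determine} those classes, so a class-level argument would be circular). The key observation is that the boundary divisor $\Delta_{0:jk} \subseteq \Mgnbar$ parameterizes stable pointed curves with a rational tail carrying exactly the two markings $x_j$ and $x_k$, attached at one node to a curve of the same arithmetic genus $g$; the map $\pi_{(jk \mapsto \bullet)}$ contracts this tail and records the node as a single new marking $x_\bullet$. Since $d_j$ and $d_k$ have the same sign, the divisor $d_j x_j + d_k x_k$ supported on the rational tail degenerates in a controlled way, and I expect its ``total contribution'' to be governed by $d_\bullet = d_j + d_k$ at the point of attachment.

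First I would set up the geometry: a generic point of $\Delta_{0:jk}$ is a curve $X = C \cup_{\nu} R$, where $R \cong \P^1$ carries the markings $x_j, x_k$ and meets the genus-$g$ component $C$ at the node $\nu$, and the remaining markings lie on $C$. I would then unwind what it means for such a curve to lie in $\Dd$, using the limit linear series characterization recalled in Section \ref{sec:preliminaries} together with the reformulation of the condition $h^0(C, \sum d_j x_j) \geq 1$ given after the Definition in terms of a pencil of degree $d_{S_+}$. The heart of the argument is a transfer computation on the rational tail: I want to show that the aspect of the limit $g^1_{d_{S_+}}$ on $R$ is forced to have a prescribed vanishing behavior at $\nu$ determined by $d_j x_j + d_k x_k$, and since $R$ is rational this aspect is essentially unique up to the choice of the point in the pencil. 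Because $d_j$ and $d_k$ share a sign, no cancellation occurs between base points and ramification contributions, so the compatibility condition at $\nu$ reduces exactly to requiring $(\L_C, V_C)$ to contain $d_\bullet x_\bullet$ (if $d_\bullet > 0$) or to vanish to order $-d_\bullet$ at $x_\bullet$ (if $d_\bullet < 0$). This is precisely the condition defining $\D_{\d'}$ on $\Mgbar{n-1}$.

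With the set-theoretic equality of supports established, I would then check that the identity holds as an identity of divisor classes (cycles), i.e. that no multiplicity is introduced by the intersection with $\delta_{0:jk}$ and the subsequent pushforward. Here I would argue that $\Dd$ meets $\Delta_{0:jk}$ transversally along a generic point, so that $\Dd \cdot \delta_{0:jk}$ is a reduced divisor on $\Delta_{0:jk} \cong \Mgbar{n-1}$, and that the contraction $\pi_{(jk \mapsto \bullet)}$ is an isomorphism identifying this with $\D_{\d'}$ with multiplicity one; the equality $\sum_{i \neq j,k} d_i + d_\bullet = g-1$ confirms that $\d'$ is an admissible index vector, so $\D_{\d'}$ is well-defined.

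The hard part will be the transversality and multiplicity bookkeeping in the last step, together with handling the degenerate subcases where the generic point of $\Dd \cap \Delta_{0:jk}$ is not as naive as the picture above suggests — for instance when the required pencil on $R$ acquires extra base points at $\nu$, or when $d_\bullet$ has a different sign-interaction with the markings than $d_j$ and $d_k$ individually do. I would expect the same-sign hypothesis on $d_j, d_k$ to be exactly what rules out these pathologies, so the main technical work is to make this rigorous using the smoothing theorem for limit $g^1_d$'s quoted from \cite{bib:eisenbud-harris-lls}, which guarantees that a limit series with the prescribed vanishing data actually deforms to nearby smooth curves and hence that the closures match.
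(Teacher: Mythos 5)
Your proposal takes essentially the same route as the paper: the paper's entire proof of this lemma is a one-line citation to Logan's Proposition 5.3, and the argument being cited (and ``easily generalized'') is precisely the limit-linear-series transfer across the rational tail that you outline, extended so that $d_j,\, d_k$ may both be negative. Your identification of where the same-sign hypothesis enters --- the conditions at $x_j$ and $x_k$ belong to the same part of the pencil condition (divisor containment when both are positive, vanishing of a single section when both are negative), so they fuse into one condition of order $\abs{d_j + d_k}$ at the node $\nu$ --- is exactly the substance of that generalization, and the transversality/multiplicity bookkeeping you defer is likewise left implicit (deferred to Logan) by the paper.
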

\begin{proof}
This is an easy generalization of the proof of \cite[Proposition
5.3]{bib:logan}.
\end{proof}

\section{Computation of the main coefficients}
\label{sec:main_coefficients}
We write the class of the divisor $\Dd$ as
\begin{equation}
\label{eq:D_coefficients}
\left[ \Dd \right] = a \lambda + \sum_{j=1}^n c_j \psi_j + b_0 \delta_0 +
\sum_{i,S} b_{i:S} \delta_{i:S}.
\end{equation}
In this section we determine the coefficients $a$ and $c_j$ by
expressing $D_{\d}$ as the degeneracy locus of a map of vector bundles of the
same rank and applying Porteous' formula. These calculations will also be
instrumental in computing some of the boundary coefficients $b_0$ and
$b_{i:S}$ in Section \ref{sec:boundary}, while the remaining ones will be
obtained by intersecting the closure $\Dd$ with suitably chosen test curves.

The top Chern class $\lambda_g := c_g(\E)$ of the Hodge bundle is known to
have class $0$ in $A^g(\Mgn)$ (see \cite{bib:looijenga}). Therefore we can
find a nowhere vanishing section of $\E$, or equivalently, a relative section
of $\omega_\pi$ over $\Mgn$, whose zero locus cuts out a canonical divisor on
every fiber of $\pi$. We denote that zero locus by $\Kscr$. Furthermore, we
denote by $\Dscr := \sum_{j=1}^n d_j \sigma_j \in \Pic(\U/\Mgn)$ the relative
divisor which on every fiber cuts out the divisor given by the linear
combination of the marked points.

We now consider the restriction map $\rho \negthinspace:\, \omega_\pi(\Dscr)
\to \omega_\pi(\Dscr)\big|_\Kscr$ and its direct image
\begin{equation}
\label{eq:vector_bundle_map}
\phi := R^0\pi_*\rho \negthinspace:\,
R^0\pi_*\big(\omega_\pi(\Dscr)\big) \to
R^0\pi_*\big(\omega_\pi(\Dscr)\big|_\Kscr\big).
\end{equation}
Since $\Dscr$ has relative degree $g-1$, we find that
$R^1\pi_*(\omega_\pi(\Dscr)) = 0$. Similarly, $\omega_\pi(\Dscr)|_\Kscr$ is
torsion on fibers, so we also have $R^1\pi_*(\omega_\pi(\Dscr)|_\Kscr) = 0$.
Thus by Grauert's theorem, both sheaves in \eqref{eq:vector_bundle_map} are in
fact locally free, and by Riemann-Roch they are easily seen to both have rank
$2g-2$.

We are now in a position to compute the main coefficients of $\Dd$.

\begin{proposition}
\label{prop:main_coefficients}
In the expression $\eqref{eq:D_coefficients}$ for $\left[ \Dd \right]$, we
have $a = -1$ and $c_j = \binom{d_j + 1}{2}$.
\end{proposition}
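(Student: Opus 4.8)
The plan is to realize $D_{\d}$ as the degeneracy locus of the bundle map $\phi$ defined in \eqref{eq:vector_bundle_map} and then apply Porteous' formula. The key observation is that a point $[C;\, x_1,\, \dots,\, x_n] \in \Mgn$ lies in $D_{\d}$ exactly when $h^0(C,\, \mathscr{D}) \geq 1$, i.\,e.\ when $\omega_\pi(\mathscr{D})$ has a section on the fiber whose image under restriction to $\Kscr$ vanishes; put differently, $D_{\d}$ is where the fiberwise map $\phi$ fails to be injective. Since source and target both have rank $2g-2$, the expected codimension of this locus is $1$, and Porteous' formula gives $[D_{\d}] = c_1\big(R^0\pi_*(\omega_\pi(\mathscr{D})|_\Kscr)\big) - c_1\big(R^0\pi_*(\omega_\pi(\mathscr{D}))\big)$. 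So the whole computation reduces to finding the first Chern classes of the two bundles.

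\textbf{Computing the two Chern classes via Grothendieck--Riemann--Roch.}
For each bundle I would apply GRR to the map $\pi\negthinspace:\U \to \Mgn$. For the source $R^0\pi_*(\omega_\pi(\mathscr{D}))$, since $R^1$ vanishes, the Chern character of the pushforward equals $\pi_*\big(\ch(\omega_\pi(\mathscr{D})) \cdot \td(\omega_\pi^\vee)\big)$; extracting the degree-one part and using $\ch(\omega_\pi(\mathscr{D})) = 1 + c_1(\omega_\pi) + \mathscr{D} + \tfrac{1}{2}(c_1(\omega_\pi) + \mathscr{D})^2 + \dots$ together with the pushforward identities of Lemma \ref{lem:pushforward_universal}, I would read off $c_1$ of the source. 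Here $\mathscr{D} = \sum_j d_j \sigma_j$, so the relevant intersection numbers involve $c_1(\omega_\pi)^2$, $c_1(\omega_\pi)c_1(\sigma_j)$, and $c_1(\sigma_j)^2$, which push forward to $12\lambda$, $\psi_j$, and $-\psi_j$ respectively; the cross terms $\sigma_j \cdot \sigma_k$ for $j \neq k$ vanish because the sections are disjoint. For the target, the restriction to $\Kscr$ means I should instead apply GRR to the sheaf $\omega_\pi(\mathscr{D})|_\Kscr$, equivalently using the exact sequence relating $\omega_\pi(\mathscr{D})$, $\omega_\pi(\mathscr{D} - \Kscr)$ and the restriction, where $\Kscr$ is the zero locus of a section of $\omega_\pi$ and hence has class $c_1(\omega_\pi)$. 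Taking the difference of the two $c_1$ computations, the $12\lambda$ contributions should combine to yield the coefficient $a = -1$, and the $\psi_j$ contributions should assemble into $\binom{d_j+1}{2}$ after collecting the linear and quadratic terms in $d_j$.

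\textbf{Anticipated main obstacle.}
I expect the principal difficulty to lie not in the GRR bookkeeping per se but in correctly handling the restriction to $\Kscr$ for the target bundle: one must track the class $[\Kscr] = c_1(\omega_\pi)$ carefully and ensure that the Todd-class and Chern-character contributions are combined consistently, since an error by a factor or a sign in the torsion-sheaf computation would corrupt the $\lambda$ coefficient. A clean way to organize this is to compute $\ch(\phi) = \ch(\text{target}) - \ch(\text{source})$ directly from the short exact sequence
\begin{equation*}
0 \to \omega_\pi(\mathscr{D} - \Kscr) \to \omega_\pi(\mathscr{D}) \to \omega_\pi(\mathscr{D})|_\Kscr \to 0,
\end{equation*}
so that the target's Chern character becomes $\ch(\omega_\pi(\mathscr{D})) - \ch(\omega_\pi(\mathscr{D} - \Kscr))$; then $c_1$ of the degeneracy locus is $\pi_*$ of the degree-two part of $\big(\ch(\omega_\pi(\mathscr{D})) - \ch(\omega_\pi(\mathscr{D} - \Kscr))\big)\td(\omega_\pi^\vee)$. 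Writing $\mathscr{D} - \Kscr = \sum_j d_j \sigma_j - \Kscr$ and expanding, the $\lambda$-terms arise solely from the $c_1(\omega_\pi)^2$ pieces and should net to $-1$, while the $\psi_j$-terms come from the $d_j^2$ and $d_j$ pieces of the $\sigma_j$ self-intersections and cross-terms with $\omega_\pi$, producing $\tfrac{1}{2}d_j(d_j+1) = \binom{d_j+1}{2}$. The final step is just to verify that no other basis classes ($\delta_0$, $\delta_{i:S}$) appear in this degree, which holds because the computation takes place over the open part $\Mgn$.
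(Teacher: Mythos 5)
Your proposal is correct and follows the paper's own proof essentially verbatim: $D_{\d}$ is realized as the degeneracy locus of the equal-rank map $\phi$, Porteous gives $\left[ D_{\d} \right] = c_1(\text{target}) - c_1(\text{source})$, and both terms are computed by Grothendieck--Riemann--Roch via the short exact sequence you write down (note $\omega_\pi(\Dscr - \Kscr) \cong \O_\U(\Dscr)$ since $\O_\U(\Kscr) \cong \omega_\pi$, so it is exactly the paper's sequence \eqref{eq:degeneracy_SES}), using the pushforward formulas of Lemma \ref{lem:pushforward_universal} and the disjointness of the sections. Just fix one slip: the degeneracy class is $\pi_*$ of the degree-two part of $-\ch\bigl(\omega_\pi(\Dscr-\Kscr)\bigr)\td\bigl(\omega_\pi^\vee\bigr)$, i.e.\ of $\bigl(\ch(\omega_\pi(\Dscr)) - \ch(\omega_\pi(\Dscr-\Kscr))\bigr)\td\bigl(\omega_\pi^\vee\bigr)$ \emph{minus} $\ch(\omega_\pi(\Dscr))\td\bigl(\omega_\pi^\vee\bigr)$, not of the first bracket alone as one of your sentences asserts --- that bracket by itself pushes forward to $\sum_j d_j \psi_j$ and carries no $\lambda$-term, whereas your final accounting of the $\lambda$- and $\psi_j$-contributions (which is correct) implicitly has the source subtracted.
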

\begin{proof}
The short exact sequence
\begin{equation}
\label{eq:degeneracy_SES}
0 \to \O_\U(\Dscr) \to \omega_\pi(\Dscr) \stackrel{\rho}{\to}
\omega_\pi(\Dscr)\big|_\Kscr \to 0,
\end{equation}
yields after pushing down the long exact sequence
\begin{equation}
\label{eq:degeneracy_LES}
\begin{split}
0 &\to R^0\pi_*\big(\O_\U(\Dscr)\big) \to R^0\pi_*\big(\omega_\pi(\Dscr)\big)
\stackrel{\phi}{\to} R^0\pi_*\big(\omega_\pi(\Dscr)\big|_\Kscr\big) \\
&\to R^1\pi_*\big(\O_\U(\Dscr)\big) \to 0.
\end{split}
\end{equation}
Since $\sum_{j=1}^n d_j = g - 1$ implies $h^0(C,\, \sum_{j=1}^n d_j x_j) =
h^1(C,\, \sum_{j=1}^n d_j x_j)$ for every point $[C;\, x_1,\, \dots,\, x_n]
\in \Mgn$, the sequence \eqref{eq:degeneracy_LES} stays exact after passing
to a fiber. Thus, the divisor $D_{\d}$ is exactly the degeneracy locus of the
map $\phi$, and by Porteous' formula it follows that
\begin{equation}
\label{eq:porteous_expression}
\left[ D_{\d} \right] =
c_1\big(R^0\pi_*\big(\omega_\pi(\Dscr)\big|_\Kscr\big)\big) -
c_1\big(R^0\pi_*\big(\omega_\pi(\Dscr)\big)\big).
\end{equation}
We can calculate the two terms in \eqref{eq:porteous_expression} by a
Grothendieck-Riemann-Roch computation. For the first one, we obtain
\begin{equation*}
\begin{split}
\ch\big(\pi_!\big(\omega_\pi(\Dscr)\big|_\Kscr\big)\big) &=
\ch\big(\pi_*\big(\omega_\pi(\Dscr)\big|_\Kscr\big)\big)\\
&= \pi_* \Big[ \ch\big(\omega_\pi(\Dscr)\big|_\Kscr\big) \cdot
\td\big(\omega_\pi^\vee\big) \Big]\\
&= \pi_* \Big[ \big(\ch\big(\omega_\pi(\Dscr)\big) -
\ch\big(\O_\U(\Dscr)\big)\big) \cdot
\td\big(\omega_\pi^\vee\big) \Big] \qquad \text{(by
\eqref{eq:degeneracy_SES})}\\
&= \pi_* \Big[ \big(\ch(\omega_\pi) - 1\big) \cdot \ch(\Dscr) \cdot
\td\big(\omega_\pi^\vee\big) \Big]\\
&= \pi_* \Big[ \big( c_1(\omega_\pi) + \frac{1}{2} c_1^2(\omega_\pi) +
\dots \big) \cdot \big( 1 + c_1(\Dscr) + \frac{1}{2} c_1^2(\Dscr) + \dots
\big) \cdot \\
& \phantom{=\pi_*} \cdot \big( 1 - \frac{1}{2} c_1(\omega_\pi) + \frac{1}{12}
c_1^2(\omega_\pi) + \dots \big) \Big] \\
&= (2g - 2) + \pi_* \Big[ c_1(\omega_\pi) c_1(\Dscr) + \dots \Big] \\
&= (2g-2) + \sum_{j=1}^n d_j \psi_j + \dots \qquad \text{(by Lemma
\ref{lem:pushforward_universal})},
\end{split}
\end{equation*}
while for the second one we compute
\begin{equation*}
\begin{split}
\ch\big(\pi_!\big(\omega_\pi(\Dscr)\big)\big) &=
\ch\big(\pi_*\big(\omega_\pi(\Dscr)\big)\big)\\
&= \pi_* \Big[ \ch(\omega_\pi) \cdot \ch(\Dscr) \cdot
\td\big(\omega_\pi^\vee\big) \Big]\\
&= \pi_* \Big[ \big( 1 + c_1(\omega_\pi) + \frac{1}{2} c_1^2(\omega_\pi) +
\dots \big) \cdot \big( 1 + c_1(\Dscr) + \frac{1}{2} c_1^2(\Dscr) + \dots
\big) \cdot \\
& \phantom{=\pi_*} \cdot \big( 1 - \frac{1}{2} c_1(\omega_\pi) + \frac{1}{12}
c_1^2(\omega_\pi) + \dots \big) \Big] \\
&= (2g-2) + \lambda + \frac{1}{2} \sum_{j=1}^n (d_j - d_j^2) \psi_j + \dots
\qquad \text{(by Lemma \ref{lem:pushforward_universal})}.
\end{split}
\end{equation*}
Putting these together into \eqref{eq:porteous_expression} yields the result.
\end{proof}

\section{Intersections with test curves}
\label{sec:test_curves}
For later use in Section \ref{sec:boundary}, we will gather here several
computations of intersections of $\Dd$ with families of pointed curves which
are wholly contained in the boundary of $\Mgnbar$. This constitutes the main
work in computing the class of $\Dd$, the remaining part being mainly a
properly engineered application of the results presented here.

\begin{remark}
\label{rmk:Schubert_fully_ramified}
In proving the results of this section, we will often come across questions of
the following form: Given a curve $C$ of genus $g$ and a positive integer $d$,
how many $g^1_d$'s $\ell$ are there on $C$ satisfying some ramification
conditions whose codimensions add up to $\rho(g,\, 1,\, d)$?

In our cases, among the conditions there will always be one of \emph{full
ramification}, where we require $\ell$ to contain some fixed effective
divisor $D$ of degree $d$. This reduces the problem to a Schubert calculus
computation in the Grassmannian $\mathbb{G}(1,\, r)$, where $r := r(D) =
h^0(C,\, D) - 1$. Postulating the vanishing sequence $(a,\, b)$ at a generic
point of $C$ corresponds to the Schubert cycle $\sigma_{a,b-1}$, and requiring
$\ell$ to contain $D$ amounts to intersecting with $\sigma_{r-1} :=
\sigma_{0,r-1}$. Since
\begin{equation*}
\sigma_{\alpha_1,\beta_1} \cdot \ldots \cdot \sigma_{\alpha_k,\beta_k} \cdot
\sigma_{r-1} = 1 \qquad \text{for } \sum_{i=1}^k (\alpha_i + \beta_i) = r - 1,
\end{equation*}
in such cases $\ell$ is always unique.
\end{remark}


We first consider the case $n = 2$, where we write $\d = (g + b - 1;\, -b)$
with $b > 0$. Here and in the following, the intersection numbers of the
families in question with generators of $\Pic(\Mgnbar)$ that are not
explicitly mentioned in the Lemmas are implied (and easily seen) to be 0.

\begin{lemma}
\label{lem:family_elliptic_tail}
Let $\left( C;\, x_1,\, x_2,\, y \right)$ be a generic 3-pointed curve of
genus $g - 1$, and let $F$ be the family in $\Mgbar{2}$ obtained by gluing
the marked point $y$ to a base point of a generic plane cubic pencil. Then we
have
\begin{align*}
& \mathrlap{F \cdot \Dd = 0,}\\
& F \cdot \lambda = 1, && F \cdot \delta_0 = 12, && F \cdot \delta_{g-1:12} =
-1.
\end{align*}
\end{lemma}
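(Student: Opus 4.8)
The plan is to analyze the family $F$ as a one-parameter family of stable pointed curves of genus $g$ and compute each intersection number separately. The family $F$ is built by taking a fixed generic $3$-pointed curve $(C; x_1, x_2, y)$ of genus $g-1$ and attaching, at the point $y$, a varying elliptic tail coming from a generic plane cubic pencil, where $y$ is glued to one of the base points of the pencil. As the cubic pencil varies over its base $\mathbb{P}^1$, the total space of $F$ is a surface, and the three standard numbers $F \cdot \lambda = 1$, $F \cdot \delta_0 = 12$, and $F \cdot \delta_{g-1:12} = -1$ are classical and well known. Specifically, $F \cdot \lambda = 1$ and $F \cdot \delta_0 = 12$ follow from the fact that a generic plane cubic pencil has $12$ singular (nodal) members and its Hodge class has degree $1$; these are the standard intersection numbers for a pencil of elliptic tails, and they can also be read off from Lemma~\ref{lem:pushforward_universal}(i) applied to the elliptic summand. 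The number $F \cdot \delta_{g-1:12} = -1$ records the self-intersection of the section along which the elliptic tail is attached, i.e.\ the $\psi$-class contribution $-\psi_y$ on the genus-$(g-1)$ side, which equals $-1$ since $(C; x_1, x_2, y)$ is a fixed generic point.

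The genuinely new content is the claim $F \cdot \Dd = 0$, and this is where I would focus the argument. The strategy is to use the limit linear series characterization: a member of $F$ lies in $\Dd$ precisely when the genus-$g$ nodal curve $C \cup_y E$ (with $E$ an elliptic curve meeting $C$ at $y$) admits a limit $g^1_{d_{S_+}}$ containing $d_1 x_1$ and vanishing to order $b = -d_2$ at $x_2$, where here $d_1 = g + b - 1$ and $d_2 = -b$. Since both marked points $x_1$ and $x_2$ lie on the genus-$(g-1)$ component $C$ and the elliptic tail $E$ carries no marked points, the compatibility conditions at the node $y$ must be examined. The key observation is that for a generic $C$, the relevant aspect on the $C$-side is forced to have a prescribed vanishing sequence at $y$, and imposing this together with the full-ramification condition at $x_1$ (as in Remark~\ref{rmk:Schubert_fully_ramified}) and the order-$b$ condition at $x_2$ over-determines the linear series: the codimensions of the conditions strictly exceed the Brill--Noether number on the genus-$(g-1)$ curve. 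I expect to show that no such limit series exists for generic $C$, so that $F$ does not meet $\Dd$ at all, giving intersection number $0$.

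Carrying this out, the first step is to identify the compatible limit-series data at the node $y$ between the $C$-aspect and the $E$-aspect. Because $E$ is a general member of the pencil and carries a degree-$d_{S_+}$ line bundle constrained only at the single point $y$, the $E$-aspect imposes a lower bound $a^{\ell_E}(y)$ on the complementary vanishing orders via the compatibility inequality $a^{\ell_C}_m(y) + a^{\ell_E}_{1-m}(y) \geq d_{S_+}$. The second step is a dimension count on $C$: combining the vanishing condition at $y$ coming from compatibility with the base-point (full-ramification) condition at $x_1$ and the order-$b$ vanishing at $x_2$, I tally the total number of conditions imposed and compare against $\rho(g-1, 1, d_{S_+})$. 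The hard part will be pinning down exactly which vanishing sequence at $y$ is forced and verifying that, for a \emph{generic} genus-$(g-1)$ curve $C$ with generic marked points, the resulting Schubert-type count yields an empty locus rather than a finite nonzero one. I anticipate the cleanest route is to show that the required vanishing order at $y$ is too large to be compatible with a degree-$d_{S_+}$ series that already has the heavy ramification prescribed at $x_1$ and $x_2$, so that the system is inconsistent and $F \cdot \Dd = 0$ follows by the properness of limits of linear series recalled after the definition of limit $g^r_d$.
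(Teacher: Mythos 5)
Your framework---limit linear series, compatibility at the node, and a dimension count against $\rho(g-1,\,1,\,d_1)$---is exactly the paper's framework, but the proposal stops precisely where the proof has to happen. You write that ``the hard part will be pinning down exactly which vanishing sequence at $y$ is forced,'' and you never pin it down; that step is not a detail but the entire content of $F \cdot \Dd = 0$. Note that the two conditions you do have cost exactly $\rho$: containing $d_1 x_1$ is codimension $d_1 - 1 = g+b-2$, vanishing to order $b$ at $x_2$ is codimension $b-1$, and $\rho(g-1,\,1,\,d_1) = g+2b-3$ is their sum. So a priori the $C$-aspect exists (indeed, by Riemann--Roch $h^0(C,\, d_1 x_1 - b x_2) = 1$ for generic $x_1,\, x_2$, so it is the unique pencil spanned by $d_1 x_1$ and $b x_2 + \sigma$ with $\sigma \in \linsys{d_1 x_1 - b x_2}$), and your ``over-determined'' conclusion needs a genuine extra condition at $y$. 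The missing input is the fact that a curve of genus $\geq 1$ carries no $g^1_1$: hence any $g^1_{d_1}$ on the elliptic tail $E$ satisfies $a_0^{\ell_E}(\text{node}) \leq d_1 - 2$, since otherwise removing the base divisor $(d_1-1)\cdot(\text{node})$ would leave a degree-one pencil on $E$. Only with this fact can you run the argument in either direction: in your ordering, compatibility then forces $a_1^{\ell_C}(y) \geq 2$, an extra codimension-one condition at the generic point $y$, pushing the total above $\rho$ and emptying the locus; in the paper's ordering, the unique $\ell_C$ is independent of $y$, so $a^{\ell_C}(y) = (0,\,1)$ by genericity, and compatibility then demands the impossible vanishing sequence $(d_1 - 1,\, d_1)$ for $\ell_E$ at the node. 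Without it, nothing prevents the elliptic tail, which carries no marked points, from imposing no condition at all on the $C$-aspect, and your count never gets off the ground.

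A secondary error: your justification of $F \cdot \delta_{g-1:12} = -1$ attributes it to ``the $\psi$-class contribution $-\psi_y$ on the genus-$(g-1)$ side,'' but that contribution is $0$, since $(C;\, x_1,\, x_2,\, y)$ is constant in the family. The $-1$ comes from the other tensor factor of the normal bundle of $\Delta_{g-1:12}$, namely the self-intersection of the base-point section of the rational elliptic surface obtained by blowing up the cubic pencil (equivalently, $-\deg\psi$ at the attaching point on the pencil side, and that degree is $1$).
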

\begin{proof}
A member of $F$ lying in $\Dd$ has a limit $g^1_{d_1}$ whose $C$-aspect
$\ell_C$ is spanned by $d_1 x_1$ and $b x_2 + \sigma$ for some $\sigma
\in \linsys{d_1 x_1 - b x_2}$. By Riemann-Roch, $h^0(C,\, (g + b - 1) x_1 - b
x_2) = 1$ for $x_1,\, x_2$ generic, so $\ell_C$ is unique, and since $y$ is
also generic, it has vanishing sequence $a^{\ell_C}(y) = (0,\, 1)$. Thus the
aspect on the elliptic tail would have to have vanishing sequence $(d_1 - 1,\,
d_1)$ at the base point, which is impossible.

The remaining intersection numbers are well known and can be found e.~g. in
\cite[p. 173f.]{bib:harris-morrison}.
\end{proof}


\begin{lemma}
\label{lem:family_2_pos}
Let $\left( C;\, x_2 \right)$ be a generic $1$-pointed curve of genus $g$, and
let $F$ be the family in $\Mgbar{2}$ obtained by letting a point $x_1$ move
along $C$. Then we have
\begin{align*}
& \mathrlap{F \cdot \Dd = g (d_1^2 - 1),}\\
& F \cdot \psi_1 = 2g - 1, && F \cdot \psi_2 = 1, && F \cdot \delta_{0:12} =
1.
\end{align*}
\end{lemma}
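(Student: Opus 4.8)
The plan is to realize $F$ explicitly as the stabilization of a blow-up of a product, read off the routine intersection numbers from that model, and then compute $F \cdot \Dd$ by an Abel--Jacobi/theta-pullback argument corrected by a local contribution at the diagonal.

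First I would construct the family. Since $C$ and $x_2$ are fixed and only $x_1$ varies, the total space is obtained from the constant family $C \times C \to C$ (projection to the second factor, serving as the base $B \cong C$) by blowing up the point $(x_2,\, x_2)$ where the diagonal section $\sigma_1 = \Delta$ meets the constant section $\sigma_2 = \{x_2\} \times C$; the exceptional curve $E$ becomes the rational bridge carrying $x_1,\, x_2$, so $F$ meets the boundary only along $\Delta_{0:12}$, transversally, giving $F \cdot \delta_{0:12} = 1$ and $F \cdot \delta_0 = 0$. Writing $\beta$ for the blow-down and $\pi$ for the projection to $B$, one computes $\omega_\pi = \beta^* p_1^* \omega_C \otimes \O(E)$; intersecting with the strict transforms of the two sections (each meeting $E$ once) yields $F \cdot \psi_1 = \deg \omega_C + 1 = 2g - 1$ and $F \cdot \psi_2 = 0 + 1 = 1$, while $F \cdot \lambda = 0$ because the Hodge bundle is constant along $F$. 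These are the ``easily seen'' numbers, to be dispatched quickly.

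The substance is $F \cdot \Dd$. Over the interior, a member $[C;\, p,\, x_2]$ with $p \neq x_2$ lies in $\Dd$ exactly when $h^0(C,\, d_1 p - b x_2) \geq 1$, i.e. when $\O_C(d_1 p - b x_2)$ lies on the theta divisor $\Theta = W^0_{g-1} \subseteq \Pic^{g-1}(C)$. Thus the interior of $\Dd \cap F$ is $\alpha^{-1}(\Theta)$ for $\alpha \negthinspace: C \to \Pic^{g-1}(C)$, $p \mapsto \O_C(d_1 p - b x_2)$, and the local intersection multiplicity of $F$ with $\Dd$ at each such point agrees with that of $\alpha(C)$ with $\Theta$. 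Factoring $\alpha$ as a translation composed with multiplication by $d_1$ on $\Pic^0(C)$ and the Abel--Jacobi map $u$, and using $[d_1]^* \theta = d_1^2 \theta$ together with the Poincar\'e formula $\deg u^* \theta = g$, I obtain $\deg \alpha^* [\Theta] = g\, d_1^2$.

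This total overcounts, because $p = x_2$ itself satisfies $\O_C((g-1)x_2) \in \Theta$ but corresponds to the boundary point of $F$, not to an honest member of $\Dd$. The hard part will be to pin down the multiplicity with which $x_2$ contributes to $\alpha^* \Theta$ and to confirm that the boundary point is genuinely \emph{not} in $\Dd$. For generic $x_2$ the bundle $\O_C((g-1)x_2)$ has $h^0 = 1$, hence is a smooth point of $\Theta$, and $\Theta$ is parameterized near it by effective divisors $q_1 + \dots + q_{g-1}$; the condition $\alpha(p) \in \Theta$ then becomes $d_1 u(p) = \sum_i u(q_i)$, which in osculating coordinates $u(x_2 + t) = (t,\, t^2/2,\, \dots,\, t^g/g) + \dots$ reads as the power-sum system $p_j(s) = d_1 t^j$ for $j = 1,\, \dots,\, g$. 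The first $g - 1$ equations solve for the $s_i$, and the last one cuts out $\alpha^{-1}(\Theta)$ near $x_2$ with lowest-order term proportional to $t^g$, the coefficient being nonzero precisely because $d_1 \geq g$; so the multiplicity is $g$. Here one must resist the naive expectation of contact order $2$ — the curvature of $\Theta$ conspires with the osculating flag of $C$ — and this is the step I expect to require the most care. Subtracting gives an interior contribution $g\, d_1^2 - g$. Finally I would rule out any boundary contribution by a limit linear series count on $C \cup_y \P^1$: an aspect on the $\P^1$ that is fully ramified at $x_1$, vanishes to order $b$ at $x_2$, and satisfies the compatibility inequalities at the node $y$ would force total ramification beyond the budget $2d_1 - 2$ of a $g^1_{d_1}$ on $\P^1$, unless the $C$-aspect is abnormally ramified at the generic point $y$ — which the Brill--Noether count $\rho(g,\, 1,\, d_1) = g + 2b - 4$ forbids. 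Hence the boundary point is not in $\Dd$, and $F \cdot \Dd = g\, d_1^2 - g = g(d_1^2 - 1)$.
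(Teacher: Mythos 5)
Your proposal is correct, but it takes a genuinely different route from the paper's. The paper never analyzes the generic smooth curve $C$ directly: it degenerates $C$ to a comb curve $R \cup_{y_1} E_1 \cup \dots \cup_{y_g} E_g$ with $x_2$ on the rational spine, invokes the Eisenbud--Harris reducedness theorem for the scheme of limit linear series on such curves to dispose of all multiplicity questions at once, and then counts limit $g^1_{d_1}$'s: a Schubert-calculus argument forces $a^{\ell_R}(y_i) \leq (0,\, d_1)$, whence the $E_i$-aspect exists iff $x_1 - y_i$ is nontrivial $d_1$-torsion in $\Pic^0(E_i)$, giving $(d_1^2 - 1)$ points per tail. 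You instead stay on the smooth curve and trade the degeneration for an excess-intersection analysis: the global degree $\deg \alpha^* \Theta = g d_1^2$ (Poincar\'e formula together with $[d_1]^* \theta \equiv d_1^2 \theta$), minus a contact-order correction at $p = x_2$, plus a vanishing boundary contribution. I checked your two delicate claims and both are right. For the contact order: by Riemann--Kempf the tangent hyperplane to $\Theta$ at the smooth point $\O_C((g-1)x_2)$ is the span of $(g-1)x_2$, i.e.\ the osculating $(g-2)$-plane at $x_2$, so tangency is forced; eliminating the $s_i$ from your power-sum system via Newton's identities gives $e_k(s) = \binom{d_1}{k} t^k + O(t^{k+1})$ for $k \leq g-1$, and the last equation then has lowest-order coefficient $(-1)^{g-1}\binom{d_1}{g}$, which is nonzero exactly because $d_1 = g + b - 1 \geq g$ --- so the multiplicity is exactly $g$ as you claim (you assert rather than compute this coefficient, but Newton's identities close that gap in a few lines, and your criterion $d_1 \geq g$ is precisely correct). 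For the boundary point: the $\P^1$-aspect would need ramification at least $(d_1 - 1) + (b - 1) + (d_1 - b + 1) = 2d_1 - 1 > 2d_1 - 2$, using that any $g^1_{d_1}$ on the generic pointed curve has ramification at most $\rho(g,\,1,\,d_1) = g + 2b - 4$ at the generic point $y$; this is also consistent with the paper's Remark \ref{rmk:weierstrass_divisor}, by which such a boundary point lies in $\Dd$ iff the node is a Weierstra\ss{} point of $C$. As for what each approach buys: the paper's degeneration makes every contribution an honest transverse count (reducedness is cited, not proved) and reuses the same limit-linear-series toolkit as all the other test-curve lemmas, while your argument is more classical, needs no degeneration of $C$, and explains the shape of the answer ($g d_1^2$ from theta geometry minus a single correction of multiplicity $g$). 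The one thing you should state explicitly is that your identification of interior intersection multiplicities of $F$ with $\Dd$ along $\alpha^* \Theta$ uses that $\Dd$ coincides, as a divisor, with the reduced pullback $\phi_{\d}^* \Theta_g$; this is the paper's own definitional starting point, so it is fair game, but it is doing real work in your argument.
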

\begin{proof}
We compute the intersection number $F \cdot \Dd$ by degenerating $C$ to a
\emph{comb curve} $R \cup_{y_1} E_1 \cup \dots \cup_{y_g} E_g$ consisting of a
rational spine $R$ to which are attached $g$ elliptic tails at generic points
$y_1,\, \dots,\, y_g$, with the point $x_2$ lying on $R$. As shown in
\cite[Section 9]{bib:eisenbud-harris-cuspidal-rational-curves}, the variety of
limit $g^r_d$'s is reduced on a generic such curve, so all we have to do is
count the number of limit linear series $\ell = (\ell_R,\, \ell_{E_1},\,
\dots,\, \ell_{E_g})$ of type $g^1_{d_1}$ satisfying the given vanishing
conditions at $x_1$ and $x_2$.

By \cite[Proposition 1.1]{bib:eisenbud-harris-lls}, we must have $x_1 \in E_i$
for some $i$. The $E_j$-aspect of each elliptic tail $E_j$ with $j \neq i$
must satisfy $a^{\ell_{E_j}}(y_j) \leq (d_1 - 2,\, d_1)$, giving
$a^{\ell_R}(y_j) \geq (0,\, 2)$ for these $j$. Thus the $R$-aspect of $\ell$
is a $g^1_{d_1}$ that contains the divisor $d_1 y_i$, vanishes to order $b$ at
$x_2$ and is simply ramified at $(g-1)$ further points, corresponding to the
Schubert cycle
\begin{equation*}
\sigma_{a_0^{\ell_R}(y_i), d_1 - 1} \cdot \sigma_{b-1} \cdot \sigma_1^{g-1}
\qquad \text{in } \mathbb{G}(1,\, d_1).
\end{equation*}
Counting dimensions, this is non-empty only if $a_0^{\ell_R}(y_i) = 0$, and
then $\ell_R$ is unique by Remark \ref{rmk:Schubert_fully_ramified}. We thus
get the upper bound $a^{\ell_R}(y_i) \leq (0,\, d_1)$, which by the
compatibility conditions is equivalent to $a^{\ell_{E_i}}(y_i) \geq (0,\,
d_1)$. Since also $a^{\ell_{E_i}}(x_1) \geq (0,\, d_1)$, this is possible only
if equality holds everywhere and $x_1 - y_i$ is a non-trivial $d_1$-torsion
point in $\Pic^0(E_i)$. Thus each of the $g$ elliptic tails gives exactly
$(d_1^2 - 1)$ possibilities for $x_1$.

The remaining intersection numbers can be found by standard techniques.
\end{proof}


\begin{lemma}
\label{lem:family_2_neg}
Let $\left( C;\, x_1 \right)$ be a generic $1$-pointed curve of genus $g$, and
let $F$ be the family in $\Mgbar{2}$ obtained by letting a point $x_2$ move
along $C$. Then we have
\begin{align*}
& \mathrlap{F \cdot \Dd = g (b^2 - 1),}\\
& F \cdot \psi_1 = 1, && F \cdot \psi_2 = 2g - 1, && F \cdot \delta_{0:12} =
1.
\end{align*}
\end{lemma}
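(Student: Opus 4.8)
The plan is to mirror the structure of Lemma~\ref{lem:family_2_pos}, which handles the family where the positive point moves, and to adapt the limit linear series count to the situation where instead the negative point $x_2$ traces out the curve $C$. First I would degenerate the generic genus-$g$ curve $C$ to the same comb curve $R \cup_{y_1} E_1 \cup \dots \cup_{y_g} E_g$ used there, a rational spine carrying $g$ generic elliptic tails, but now with the \emph{positive} point $x_1$ fixed on the spine $R$ and the moving point $x_2$ free to land on one of the components. Since by \cite[Section 9]{bib:eisenbud-harris-cuspidal-rational-curves} the variety of limit $g^r_d$'s is reduced on such a comb, the intersection number $F \cdot \Dd$ is computed by counting limit $g^1_{d_1}$'s $\ell = (\ell_R,\, \ell_{E_1},\, \dots,\, \ell_{E_g})$ subject to the vanishing conditions dictated by $\d = (g+b-1;\, -b)$: namely $\ell$ must contain the divisor $d_1 x_1$ and have a section vanishing to order $b$ at the moving point $x_2$.

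The key step is the localization argument forcing $x_2$ onto a single elliptic tail. As before, the aspect $\ell_{E_j}$ on any tail not containing $x_2$ is forced to have vanishing sequence $a^{\ell_{E_j}}(y_j) \leq (d_1 - 2,\, d_1)$, which via the compatibility inequalities pushes $a^{\ell_R}(y_j) \geq (0,\, 2)$. Consequently the $R$-aspect is a $g^1_{d_1}$ containing $d_1 x_1$ (full ramification at $x_1$, since $x_1 \in R$ now), simply ramified at the $g-1$ nodes $y_j$ with $x_2 \notin E_j$, and carrying the imposed order-$b$ vanishing at the node $y_i$ leading to the tail $E_i$ that does contain $x_2$. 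I expect the Schubert calculus bookkeeping in $\mathbb{G}(1,\, d_1)$ to again pin down $\ell_R$ uniquely by Remark~\ref{rmk:Schubert_fully_ramified}, forcing $a^{\ell_R}(y_i) \leq (0,\, d_1)$ and hence $a^{\ell_{E_i}}(y_i) \geq (0,\, d_1)$. The section realizing the order-$b$ vanishing at $x_2$ then lives on $E_i$, and the count of admissible positions for $x_2$ on each elliptic tail reduces to a torsion computation in $\operatorname{Pic}^0(E_i)$: the condition is that $x_2 - y_i$ be a nontrivial $b$-torsion point, giving $b^2 - 1$ choices per tail, hence $g(b^2 - 1)$ in total.

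The main obstacle will be verifying that the roles of the two points have genuinely swapped in the Schubert and torsion analysis: in Lemma~\ref{lem:family_2_pos} full ramification came from the fixed divisor $d_1 x_1$ with $x_1$ on a tail and the order-$b$ vanishing sat at $x_2 \in R$, whereas here $x_1$ sits on the spine and $x_2$ roams, so the degree of the torsion condition becomes $b$ rather than $d_1$. I would therefore take care to confirm that the compatibility inequalities at $y_i$ still force equality everywhere, that the relevant linear series on $E_i$ is the one cut out by $b x_2$ against the fixed aspect, and that genericity of the tails excludes any extra contributions. Once the count $g(b^2 - 1)$ is established, the remaining intersection numbers $F \cdot \psi_1 = 1$, $F \cdot \psi_2 = 2g - 1$, and $F \cdot \delta_{0:12} = 1$ follow by the same standard techniques as in the symmetric Lemma~\ref{lem:family_2_pos}, since the family $F$ here is the mirror image of that one under interchanging the two marked points.
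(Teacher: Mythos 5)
Your overall strategy coincides with the paper's: its proof is literally ``proceed as in Lemma~\ref{lem:family_2_pos}, degenerating $C$ to a comb curve where now $x_1 \in R$,'' followed by a torsion count on the tails. But your central step imports the numerical bounds from Lemma~\ref{lem:family_2_pos} unchanged, and they are the wrong ones here. You claim the Schubert count forces $a^{\ell_R}(y_i) \leq (0,\, d_1)$ and hence $a^{\ell_{E_i}}(y_i) \geq (0,\, d_1)$. The first bound is vacuous, and the second is far too weak to yield your torsion conclusion: a $g^1_{d_1}$ on the elliptic tail $E_i$ containing $d_1 y_i$ and having a section vanishing to order $b$ at $x_2$ exists for \emph{every} position of $x_2$, because the residual divisor $D$ of degree $d_1 - b = g - 1$ in that section is then unconstrained --- one may take any effective $D$ with $D \equiv d_1 y_i - b x_2$, which exists since every line bundle of degree $g-1 \geq 1$ on an elliptic curve is effective. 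From what you actually establish, the count would therefore be infinite, not $b^2 - 1$ per tail; the claim that ``$x_2 - y_i$ is a nontrivial $b$-torsion point'' is a non sequitur at this stage.

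What is needed --- and what the paper's phrase ``for dimension reasons'' supplies --- is the sharper bound $a^{\ell_R}(y_i) \leq (0,\, b)$. On $R$ the full-ramification condition now sits at $x_1 \in R$ and costs codimension $d_1 - 1$ in $\mathbb{G}(1,\, d_1)$ (in Lemma~\ref{lem:family_2_pos} this condition was instead absorbed at the node), the $g-1$ simple ramifications cost $g-1$, so at most codimension $b - 1$ is left over at $y_i$; hence $a_0^{\ell_R}(y_i) = 0$ and $a_1^{\ell_R}(y_i) \leq b$. Compatibility then gives $a^{\ell_{E_i}}(y_i) \geq (g-1,\, g+b-1)$, and it is precisely the lower bound $a_0^{\ell_{E_i}}(y_i) \geq g-1$ that pins the section through $b x_2$ down to the divisor $b x_2 + (g-1) y_i$, whence $b x_2 + (g-1) y_i \equiv d_1 y_i$, i.e. $x_2 - y_i$ is nontrivial $b$-torsion. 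Ironically, your earlier sentence that the $R$-aspect carries an order-$b$ vanishing at $y_i$ would have led to exactly this: the cycle $\sigma_{d_1 - 1} \cdot \sigma_1^{g-1} \cdot \sigma_{b-1}$ has total codimension $2d_1 - 2 = \dim \mathbb{G}(1,\, d_1)$, forcing $a^{\ell_R}(y_i) = (0,\, b)$ exactly; but you then reverted to the bounds of the other lemma. You flagged this verification as your ``main obstacle'' and left it open --- yet it is the entire mathematical content of the proof, so as written the proposal has a genuine gap at its key step.
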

\begin{proof}
We proceed as in the proof of Lemma \ref{lem:family_2_pos}, degenerating $C$
to a comb curve where now $x_1 \in R$. Reasoning as before, we find that
$x_2 \in E_j$ for some $j$ and $a^{\ell_R}(y_j) \leq (0,\, b)$ for dimension
reasons, so $a^{\ell_{E_j}}(y_j) \geq (g - 1,\, g + b - 1)$. Together with
$a^{\ell_{E_j}}(y_j) \geq (0,\, b)$ this implies that $x_2 - y_j$ is a
non-trivial $b$-torsion point in $\Pic^0(E_j)$, so each of the $g$ elliptic
tails contributes $(b^2 - 1)$ possibilities for $x_2$.
\end{proof}


\begin{lemma}
\label{lem:family_G_i:12_*}
Let $\left(C_1;\, x_1,\, x_2,\, y \right)$ be a generic $3$-pointed curve of
genus $g-i$, $C_2$ a generic curve of genus $i \geq 2$, and let $F$ denote the
family in $\Mgbar{2}$ obtained by gluing $y$ to a moving point of $C_2$. Then
we have
\begin{align*}
& F \cdot \Dd = i (i^2 - 1),\\
& F \cdot \delta_{g-i:12} = 2 - 2i.
\end{align*}
\end{lemma}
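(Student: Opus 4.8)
The plan is to treat the two intersection numbers separately, computing $F \cdot \delta_{g-i:12}$ by a standard normal bundle argument and $F \cdot \Dd$ by a limit linear series count that ultimately reduces to counting Weierstra\ss{} points on $C_2$.

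For the boundary number, I observe that every member of $F$ is reducible of type $\delta_{g-i:12}$, since the points $x_1,\, x_2$ always lie on the genus $g-i$ component $C_1$. Thus $F$ is a curve contained in the boundary divisor $\delta_{g-i:12}$, and $F \cdot \delta_{g-i:12}$ computes the degree of the normal bundle of that divisor restricted to $F$. Because the $C_1$-side attaching point $y$ is fixed while the node sweeps out all of $C_2$, this normal bundle is the tangent line $T_p C_2$ pulled back along the isomorphism $F \cong C_2$, whose degree is $-\deg \omega_{C_2} = 2 - 2i$. The remaining (vanishing) intersection numbers would be obtained by the same standard techniques already invoked in Lemma~\ref{lem:family_2_pos}.

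For $F \cdot \Dd$ I would first use the reducedness of the variety of limit linear series on a generic curve of compact type to reduce the problem to counting limit $g^1_{d_1}$ on $X = C_1 \cup_\nu C_2$ that contain $d_1 x_1$ and carry a section vanishing to order $b$ at $x_2$, as the node $\nu = p$ varies over $C_2$. The key step is to control how the aspects distribute the two distinguished sections. Combining the compatibility relations at $\nu$ with a Schubert dimension count on the $C_1$-aspect in the spirit of Remark~\ref{rmk:Schubert_fully_ramified}, I expect to show that for any contributing configuration the $C_2$-aspect $\ell_{C_2}$ is forced to contain $d_1 p$ and to admit a second independent section vanishing to order $d_1 - i$ at $p$; equivalently $h^0(C_2,\, i p) \geq 2$, which is exactly the condition that $p$ be a Weierstra\ss{} point of $C_2$. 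At each such $p$ the compatible $C_1$-aspect is then pinned down uniquely by the Schubert uniqueness statement of Remark~\ref{rmk:Schubert_fully_ramified}.

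Granting this, the count is immediate: a generic curve of genus $i$ has precisely $i^3 - i = i(i^2 - 1)$ Weierstra\ss{} points, all of weight one, and the reducedness of the limit linear series variety ensures that each contributes exactly once, giving $F \cdot \Dd = i(i^2 - 1)$. I anticipate the main obstacle to be the bookkeeping of vanishing sequences at $\nu$: one must allow for the section realizing $d_1 x_1$ to degenerate entirely onto $C_2$ in the limit, and then verify that the surviving configurations are exactly those forcing the Weierstra\ss{} condition on $p$ — in particular ruling out, via the dimension count on $C_1$, any configuration that would demand vanishing order exceeding $i$ at $y$.
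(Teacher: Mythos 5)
Your proposal is correct and follows essentially the same route as the paper: a dimension count shows the $g^1_{d_1}$'s on $C_1$ with the prescribed ramification at $x_1,\, x_2$ form an $(i-1)$-dimensional family, so genericity of $y$ bounds $a_1^{\ell_{C_1}}(y) \leq i$; compatibility at the node then forces $a_0^{\ell_{C_2}} \geq d_1 - i$, which together with the containment of $d_1 p$ gives $h^0(C_2,\, ip) \geq 2$, identifying the contributing points as the $i(i^2-1)$ ordinary Weierstra\ss{} points of $C_2$, with uniqueness of $\ell$ from Remark \ref{rmk:Schubert_fully_ramified}. The only difference is that you spell out the normal-bundle argument for $F \cdot \delta_{g-i:12} = 2 - 2i$, which the paper leaves as a standard computation.
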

\begin{proof}
Let $\ell = (\ell_{C_1},\, \ell_{C_2})$ be a limit $g^1_{d_1}$ on $C$. By
genericity, the family of $g^1_{d_1}$'s on $C_1$ with the required
vanishing at $x_1$ and $x_2$ has dimension $\rho(g - i,\, 1,\, d_1) - (d_1 -
1) - (b - 1) = i - 1$, so for $y \in C_1$ also generic we must have
$a_1^{\ell_{C_1}}(y) \leq i$. The compatibility relations then force
$a_0^{\ell_{C_2}}(y) \geq d_1 - i$. Since $\ell_{C_2}$ contains the divisor
$d_1 y$, this means that $\linsys{i y}$ is a $g^1_i$ on $C_2$, i.~e. $y$ is
one of the $i (i^2 - 1)$ Weierstra\ss{} point of $C_2$. Since $C_2$ is
generic, it has only ordinary Weierstra\ss{} points, so we must have equality,
and $\ell$ is unique by Remark \ref{rmk:Schubert_fully_ramified}.
\end{proof}


We now turn to cases where $n = 3$. We will first suppose that $d_1,\, d_2 >
0$, while $d_3 < 0$, and we write $b := -d_3$ and $d := d_1 + d_2 = g + b -
1$.

For $b = 1$, the following result was already proven in \cite[Proposition
3.3]{bib:logan} and \cite[Lemma 6.2]{bib:diaz-thesis}.

\begin{lemma}
\label{lem:family_F2_2pos}
Let $\left( C;\, x_2,\, x_3 \right)$ be a generic $2$-pointed curve of genus
$g$, and let $F$ be the family in $\Mgbar{3}$ obtained by letting a point
$x_1$ vary on $C$. Then we have
\begin{align*}
& \mathrlap{F \cdot \Dd = g d_1^2 - (g - d_2)_+,}\\
& F \cdot \psi_1 = 2g, && F \cdot \psi_2 = 1, && F \cdot \psi_3 = 1,\\
& F \cdot \delta_{0:12} = 1, && F \cdot \delta_{0:13} = 1.
\end{align*}
\end{lemma}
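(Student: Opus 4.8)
We begin with the auxiliary intersection numbers. As $F$ is (a copy of) the base curve $C$, with $x_1$ sweeping it out and degenerating to a rational bridge exactly when $x_1$ collides with $x_2$ or with $x_3$, the numbers $F \cdot \psi_j$, $F \cdot \delta_{0:12} = F \cdot \delta_{0:13} = 1$ follow from the standard self-intersection computation for a moving point, just as in Lemmas \ref{lem:family_2_pos} and \ref{lem:family_2_neg}. The real content is $F \cdot \Dd$. By the reformulation in the introduction, a smooth member $\left[ C;\, x_1,\, x_2,\, x_3 \right]$ lies in $D_{\d}$ precisely when $C$ carries a $g^1_d$ (with $d = d_1 + d_2 = g + b - 1$) containing the divisor $d_1 x_1 + d_2 x_2$ and possessing a section vanishing to order $b$ at $x_3$. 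The plan is to compute this number by degenerating $C$ to a comb curve and counting limit $g^1_d$'s as $x_1$ varies, exactly along the lines of Lemmas \ref{lem:family_2_pos} and \ref{lem:family_2_neg}.

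First I would degenerate $C$ to a comb $R \cup_{y_1} E_1 \cup \dots \cup_{y_g} E_g$ with the fixed points $x_2,\, x_3 \in R$ and generic attachment points $y_i$; by \cite{bib:eisenbud-harris-cuspidal-rational-curves} the variety of limit series is reduced, so it suffices to count. I expect two contributions. \emph{Tails case} ($x_1 \in E_i$): the $g-1$ empty tails force $a^{\ell_R}(y_j) \geq (0,\, 2)$, and section-compatibility at $y_i$ pins the $R$-aspect to a $g^1_d$ on $R \cong \P^1$ containing the degree-$d$ divisor $d_2 x_2 + d_1 y_i$, vanishing to order $b$ at $x_3$, and simply ramified at the $g-1$ remaining nodes. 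Since the codimensions $(d-1) + (b-1) + (g-1)$ add up to $2(d-1) = \dim \mathbb{G}(1,\, d)$, this $\ell_R$ is unique by Remark \ref{rmk:Schubert_fully_ramified}, and forces $\ell_{E_i}$ to carry a section with divisor $d_1 x_1 + d_2 y_i$. This is possible exactly when $x_1 - y_i$ is a \emph{non-trivial} $d_1$-torsion point of $\Pic^0(E_i)$, so each tail contributes $d_1^2 - 1$ positions of $x_1$, for a total of $g(d_1^2 - 1)$.

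The remaining contribution is the \emph{spine case} $x_1 \in R$, and this is where the correction term originates. Now all $g$ tails are empty, so $\ell_R$ must be a $g^1_d$ on $R$ containing $d_1 x_1 + d_2 x_2$, vanishing to order $b$ at $x_3$, and simply ramified at all $g$ nodes. The section realizing $d_1 x_1 + d_2 x_2$ forces ramification of orders $d_1 - 1$ and $d_2 - 1$ at $x_1$ and $x_2$, while the section at $x_3$ forces order $b - 1$; these consume $(d_1 - 1) + (d_2 - 1) + (b - 1) = g + 2b - 4$ of the total $2d - 2 = 2g + 2b - 4$ ramification, leaving room for exactly $g$ simple ramifications at the general nodes $y_1,\, \dots,\, y_g$. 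I would show by a Schubert (equivalently Wronskian) computation that the number of $x_1 \in R$ admitting such an $\ell_R$ equals $\min(d_2,\, g)$; the truncation reflects that at most $g$ of the required node conditions can be imposed independently on top of the forced ramification. Combining the two cases gives
\[
F \cdot \Dd = g(d_1^2 - 1) + \min(d_2,\, g) = g d_1^2 - (g - d_2)_+ ,
\]
as claimed. As a consistency check, setting $d_2 = 0$ collapses the spine contribution to $0$ and recovers $g(d_1^2 - 1)$ of Lemma \ref{lem:family_2_pos}.

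The main obstacle is the spine count $\min(d_2,\, g)$: unlike the tails case, the conditions there overdetermine $\mathbb{G}(1,\, d)$ by one, so the count is a genuine enumerative (excess-intersection) computation in which $x_1$ must be treated as the free parameter and the two regimes $d_2 < g$ and $d_2 \geq g$ handled separately to produce the $(\,\cdot\,)_+$ truncation. A secondary point requiring care is confirming that both contributions are transverse, i.e.\ that the reducedness of the limit-series scheme on the comb translates into each counted $x_1$ appearing with multiplicity one in $F \cdot \Dd$; this I would again justify via the genericity of the attachment points and the ordinariness of the Weierstra\ss{} and torsion data, as in the preceding lemmas.
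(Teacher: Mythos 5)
Your comb strategy (degenerate $C$ to $R \cup_{y_1} E_1 \cup \dots \cup_{y_g} E_g$ with $x_2,\, x_3$ on the rational spine, and split $F \cdot \Dd$ into a tails contribution and a spine contribution) is genuinely different from the paper's proof, and your tails count $g(d_1^2 - 1)$ is argued correctly — it is the same argument as in Lemma \ref{lem:family_2_pos}. But there is a real gap at the crux: the spine count. You assert that the number of positions of $x_1 \in R$ admitting a $g^1_d$ on $\P^1$ containing $d_1 x_1 + d_2 x_2$, vanishing to order $b$ at $x_3$, and cuspidal at all $g$ nodes equals $\min(d_2,\, g)$, but you never prove this; you explicitly defer it as ``the main obstacle,'' and the value is visibly reverse-engineered from the statement being proved, since $g(d_1^2 - 1) + \min(d_2,\, g) = g d_1^2 - (g - d_2)_+$. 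This count is \emph{not} an instance of Remark \ref{rmk:Schubert_fully_ramified}: there the full-ramification condition plus the remaining conditions exactly fill up the Grassmannian, whereas here the conditions overdetermine $\mathbb{G}(1,\, d)$ by one and a point moves, so one faces a de Jonqui\`eres-type enumerative problem. Moreover, the answer $\min(d_2,\, g)$ is not polynomial in the data, so it cannot arise from a single Schubert-class evaluation on $\mathbb{G}(1,\, d) \times R$; the two regimes $d_2 < g$ and $d_2 \geq g$ must differ by degenerate or excess solutions (pencils acquiring base points, $x_1$ colliding with $x_2$ or with nodes) that would have to be identified and discarded with correct multiplicities, in addition to verifying that the comb degeneration itself computes $F \cdot \Dd$ without hidden boundary contributions. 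None of this is carried out, and it is exactly where the whole content of the correction term $(g - d_2)_+$ resides.

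It is worth seeing how the paper sidesteps precisely this difficulty: instead of a comb, it degenerates $C$ to $E \cup_y C'$ with $(E;\, x_2,\, y)$ a generic $2$-pointed elliptic curve and $(C';\, y,\, x_3)$ generic of genus $g - 1$, writes $F = F_E + F_{C'}$, and shows $F_E \cdot \Dd = d_1^2$ (the $g = 1$ case, done by hand) while $F_{C'} \cdot \Dd = F_{C'}' \cdot \D_{(d_1, d_2 - 1; d_3)}$, falling back on Lemma \ref{lem:family_2_pos} when $d_2 = 1$. The induction simultaneously decreases $g$ and $d_2$, each elliptic layer contributing $d_1^2$ while $d_2 > 0$ and $d_1^2 - 1$ thereafter, so the truncation $(g - d_2)_+$ falls out of the bookkeeping rather than from a hard direct count. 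To salvage your approach you would need to actually perform the spine enumeration — most plausibly by a further degeneration of the spine data, at which point you would essentially be reconstructing the paper's induction.
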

\begin{proof}
Suppose first that $g = 1$, i.~e. $b = d$. Then a $g^1_d$ containing the
divisors $d_1 x_1 + d_2 x_2$ and $d x_3$ exists if and only if these
are linearly equivalent, and since $d_2 > 0$ this gives $d_1^2$ possibilities
for $x_1$ as claimed.

If $g > 1$, we degenerate $C$ to a transverse union $C = E \cup_y C'$ such
that $\left( E;\, x_2,\, y \right)$ is a generic $2$-pointed elliptic curve
and $\left( C';\, y,\, x_3 \right)$ is a generic $2$-pointed curve of genus $g
- 1$. Then there is a decomposition $F = F_E + F_{C'}$ of $1$-cycles on
$\Mgbar{3}$, where $F_E$ and $F_{C'}$ correspond to the cases $x_1 \in E$ and
$x_1 \in C'$. These are in a natural way pushforwards via gluing morphisms of
$1$-cycles $F_E'$ and $F_{C'}'$ on $\Mbar{1,3}$ and $\Mbar{g-1,3}$,
respectively. We will show that
\begin{align}
\label{eq:FEdotDd}
F_E \cdot \Dd &= F_E' \cdot \D_{(d_1, d_2; -d)} \text{\quad(}= d_1^2 \text{
by the above)}, \\
\label{eq:FCdotDd}
F_{C'} \cdot \Dd &=
\begin{cases}
F_{C'}' \cdot \D_{(d_1, d_2 - 1; d_3)} & \text{if } d_2 > 1, \\
(g - 1)(d_1^2 - 1) & \text{if } d_2 = 1,
\end{cases}
\end{align}
and by induction we conclude that
\begin{equation*}
F \cdot \Dd = \sum_{i=1}^{d_2} d_1^2 + \sum_{i=d_2+1}^g (d_1^2 - 1) = g d_1^2
- (g - d_2)_+.
\end{equation*}

For showing \eqref{eq:FEdotDd}, let $\ell = (\ell_E,\, \ell_{C'})$ be a
$g^1_d$ having the required vanishing. Then $\ell_E$ has a section not
vanishing at $y$, so by the compatibility conditions $\ell_{C'}$ must be
totally ramified there. Counting dimensions as in the proof of Lemma
\ref{lem:family_G_i:12_*}, we find that the latter cannot have a base point at
$y$, so again by compatibility $\ell_E$ needs to have a section vanishing to
order $d$ at $y$. This is equivalent to requiring $(E;\, x_1,\, x_2,\, y)$
to lie in $\D_{(d_1,d_2;-d)}$.

Now consider \eqref{eq:FCdotDd}. Since $\ell_E$ contains $d_1 y + d_2 x_2$,
and by genericity $d_2 x_2 \not\equiv d_2 y$, it cannot also contain the
divisor $d y$, so we must have $a_1^{\ell_{E}}(y) \leq d - 1$. By the
compatibility condition then $a_0^{\ell_{C'}}(y) \geq 1$, and after removing
the base point we obtain a $g^1_{d-1}$ on $C'$ containing the divisor $d_1 x_1
+ (d_2 - 1) y$ and having a section vanishing to order $b$ at $x_3$. For $d_2
> 1$ this is equivalent to $(C';\, x_1,\, y,\, x_3) \in \D_{(d_1, d_2-1;
d_3)}$, while for $d_2 = 1$ the answer is given in Lemma
\ref{lem:family_2_pos}.
\end{proof}


\begin{lemma}
\label{lem:family_F2_i_2pos}
Let $\left( C_1;\, y \right)$ be a generic $1$-pointed curve of genus $i \geq
1$ , $\left( C_2;\, x_2,\, x_3,\, y \right)$ a generic $3$-pointed curve of
genus $g - i$, $\left( C = C_1 \cup_y C_2;\, x_2,\, x_3 \right)$ the
$2$-pointed curve obtained by gluing $C_1$ and $C_2$ at $y$, and $F$ the
family in $\Mgbar{3}$ obtained by letting a point $x_1$ move along $C_1$. Then
we have
\begin{align*}
& \mathrlap{F \cdot \Dd = i (d_1^2 - 1) + (i - d_1)_+,}\\
& F \cdot \psi_1 = 2i - 1, && F \cdot \delta_{i:1} = -1, && F \cdot
\delta_{i:\emptyset} = 1.
\end{align*}
These formulas also hold for $d_2 = 0$.
\end{lemma}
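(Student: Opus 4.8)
The plan is to read off membership in $\Dd$ along $F$ via limit linear series and then count. A point of $F$ lies in $\Dd$ exactly when the nodal curve $C = C_1 \cup_y C_2$ carries a limit $g^1_d$ (with $d = d_1 + d_2 = g + b - 1$) whose $C_1$-aspect $\ell_{C_1}$ contains the divisor $d_1 x_1$ and whose $C_2$-aspect $\ell_{C_2}$ contains $d_2 x_2$ and has a section vanishing to order $b$ at $x_3$. I would first analyse the fixed side $C_2$: since $C_2$ and the points $x_2,\, x_3,\, y$ are generic, a Brill-Noether count of the conditions imposed at $x_2$, $x_3$ and $y$ against $\rho(g-i,\, 1,\, d)$ shows that a vanishing sequence $(\beta_0,\, \beta_1)$ at $y$ can occur only when $\beta_0 + \beta_1 \leq i + d_1$. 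Feeding this through the compatibility inequalities $a_0^{\ell_{C_1}}(y) + a_1^{\ell_{C_2}}(y) \geq d$ and $a_1^{\ell_{C_1}}(y) + a_0^{\ell_{C_2}}(y) \geq d$ at the node then yields a sharp lower bound on the vanishing sequence of $\ell_{C_1}$ at $y$.

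The remaining task is to count admissible pairs $(\ell_{C_1},\, x_1)$ on the genus-$i$ curve $C_1$ subject to this boundary condition at $y$. To make the count explicit I would degenerate $C_1$ to a comb curve $R \cup_{z_1} E_1 \cup \dots \cup_{z_i} E_i$ with a rational spine $R$ carrying $y$ and $i$ generic elliptic tails, exactly as in the proof of Lemma \ref{lem:family_2_pos}, where the variety of limit series is reduced. For $x_1$ lying on a tail $E_k$, the aspects on the spine and the other tails are forced, the spine aspect being pinned down uniquely by the Schubert argument of Remark \ref{rmk:Schubert_fully_ramified}, and the compatibility at $z_k$ forces $x_1 - z_k$ to be a nontrivial $d_1$-torsion point of $E_k$. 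This produces $d_1^2 - 1$ positions on each of the $i$ tails, accounting for the main term $i (d_1^2 - 1)$.

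The hard part will be the correction term $(i - d_1)_+$, which has no analogue in Lemma \ref{lem:family_2_pos} and should originate from the new feature here, namely the component $C_2$ glued at $y$, which relaxes the vanishing budget at the node relative to that lemma. I expect it to arise from an exceptional family of limit series in which $x_1$ does \emph{not} sit on an elliptic tail but instead the moving point contributes to the spine aspect, a configuration whose Schubert count on $\mathbb{G}(1,\, d)$ is nonempty precisely when $i > d_1$ and then contributes $i - d_1$ further solutions. Verifying that exactly these extra solutions occur, that they are reduced, and that they are disjoint from the torsion solutions above is the delicate point of the proof; I would isolate it by a careful dimension count on $\mathbb{G}(1,\, d)$ incorporating the vanishing forced at $y$ by $\ell_{C_2}$.

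Finally, the ancillary numbers are routine. Since $x_1$ sweeps out the genus-$i$ curve $C_1$, one gets $F \cdot \psi_1 = 2i - 1$ and $F \cdot \delta_{i:1} = -1$ from the self-intersection of the moving section, while $F \cdot \delta_{i:\emptyset} = 1$ records the single moment $x_1 \to y$ at which a rational bridge sprouts and $C_1$ is left with no marked point; all three follow by the standard techniques used in the previous lemmas. The case $d_2 = 0$ requires only the observation that the Brill-Noether and Schubert counts above go through unchanged when the point $x_2$ and the condition that $\ell_{C_2}$ contain $d_2 x_2$ are simply dropped.
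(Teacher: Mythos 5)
Your route differs genuinely from the paper's: the paper never degenerates $C_1$ at all, but instead analyzes the $C_2$-aspect on the original curve and reduces the count on the smooth generic $C_1$ to the previously proven Lemmas \ref{lem:family_2_pos} and \ref{lem:family_F2_2pos}. Your degeneration picture is structurally plausible (and your guess for the split — torsion points on the tails giving $i(d_1^2-1)$, configurations with $x_1$ on the spine giving $(i-d_1)_+$ — is in fact consistent with what happens), but as written there are two genuine gaps. First, your analysis of the fixed side is too coarse. The bound $\beta_0 + \beta_1 \leq i + d_1$ does not capture what actually drives the computation, namely the dichotomy for $a_0 := a_0^{\ell_{C_2}}(y)$: writing $\ell_{C_2} = a_0 y + \ell_{C_2}'$, a Schubert dimension count gives $a_0 \leq i - 1$, and then either $a_0 < d_1$, in which case $a_1^{\ell_{C_2}}(y) = d_1$ \emph{exactly} and the $C_1$-side acquires a base point $d_2 y$, or $a_0 = d_1 \leq i - 1$, in which case $a_1^{\ell_{C_2}}(y) \leq i$ and the $C_1$-side has base point $(d-i)y$. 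These two regimes correspond exactly to $d_1 \geq i$ and $d_1 < i$, and they impose different conditions at $y$, hence different shapes for your spine aspect: when $d_1 < i$ the spine aspect is forced to carry a base point of order $d-i$ at $y$, so the relevant pencil is a $g^1_i$, not a $g^1_d$ ``pinned down uniquely by Remark \ref{rmk:Schubert_fully_ramified}.'' Without this finer dichotomy neither your tail count nor your spine count can even be set up.

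Second, and more seriously, the correction term is precisely the point you defer. The claim that the spine configurations contribute exactly $i - d_1$ solutions when $i > d_1$ is not a routine Schubert count: after stripping the forced base point one must count pairs $(x_1,\, \ell)$ where $\ell$ is a pencil of degree $i$ on $R \cong \P^1$ containing the \emph{moving} divisor $d_1 x_1 + (i - d_1)y$ and simply ramified at the $i$ attachment points $z_1,\, \dots,\, z_i$ — a problem with a moving point and non-generic (osculating) incidence conditions, for which one must also verify multiplicity one. This is essentially the content of Lemma \ref{lem:family_F2_2pos} with $\d = (d_1,\, i - d_1;\, -1)$, which the paper proves by a separate degeneration-and-induction argument and then simply invokes here; your plan thus implicitly assumes the hardest ingredient. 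Relatedly, to convert a limit-series count on your degenerate curve into the intersection number $F \cdot \Dd$ you need reducedness of the space of limit series, but the Eisenbud--Harris statement cited in the proof of Lemma \ref{lem:family_2_pos} concerns generic combs with elliptic tails, whereas your curve carries the additional genus $g-i$ component $C_2$ glued at $y$; this extension also requires justification.
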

\begin{proof}
Let $\ell = (\ell_{C_1},\, \ell_{C_2})$ be a limit $g^1_d$ on $C$ satisfying
the given vanishing conditions and write $\ell_{C_2} = a_0 y + \ell_{C_2}'$,
where $a_0 := a_0^{\ell_{C_2}}(y)$. Then $\ell_{C_2}'$ contains the divisor
$(d_1-a_0)y + d_2 x_2$ and has a section vanishing to order $b$ at $x_3$, so
it corresponds to the Schubert cycle $\sigma_{r-1} \cdot \sigma_{b-1}$ in
$\mathbb{G}(1,\, r)$, where
\begin{equation*}
r := h^0(C_2,\, (d_1-a_0)y + d_2 x_2) - 1 = d - a_0 - g + i
\end{equation*}
by Riemann-Roch. This is non-empty only if $b \leq r$, or equivalently if $a_0
\leq i - 1$.

In case $a_0 < d_1$, we have $a_1^{\ell_{C_2}}(y) = d_1$, so
$a_0^{\ell_{C_1}}(y) \geq d_2$. Then $\ell_{C_1}' := \ell_{C_1} - d_2 y$ is a
$g^1_{d_1}$ fully ramified at $x_1$. Since $C_1$ is generic and therefore has
only ordinary Weierstra\ss{} points, this is possible only if $d_1 \geq i$.
Since $a_1^{\ell_{C_1}}(y) \geq d - a_0 \geq d - i + 1$, $\ell_{C_1}'$
vanishes to order $d_1 - i + 1$ at $y$, so by Lemma \ref{lem:family_2_pos} the
number of possibilities is
\begin{equation*}
F \cdot \Dd = i (d_1^2 - 1).
\end{equation*}

If on the other hand $a_0 = d_1$, we have $d_1 \leq i - 1$ by the above. By
another Schubert cycle computation for $\ell_{C_2}'$ we find that we need to
have $a_1^{\ell_{C_2}}(y) \leq i$, so $a_0^{\ell_{C_1}}(y) \geq d - i$. Thus
$\ell_{C_1} - (d-i) y$ is now a $g^1_i$ having $d_1 x_1 + (i - d_1) y$ as a
section. Applying Lemma \ref{lem:family_F2_2pos} with $\d = (d_1,\, i - d_1;\,
-1)$, we find that
\begin{equation*}
F \cdot \Dd = i d_1^2 - d_1.
\end{equation*}

Both arguments also go through when $d_2 = 0$.
\end{proof}


Still considering cases where $n = 3$, we now suppose that $d_1 > 0$, while
$d_2,\, d_3 < 0$, and we write $b_j := -d_j$ for $j = 2,\, 3$ and $b :=
-d_2 - d_3$, so that $d_1 = g + b - 1$.

\begin{lemma}
\label{lem:family_F2_2neg}
Let $\left( C;\, x_1,\, x_3 \right)$ be a generic $2$-pointed curve of genus
$g$, and let $F$ be the family in $\Mgbar{3}$ obtained by letting a point
$x_2$ vary on $C$. Then we have
\begin{align*}
& \mathrlap{F \cdot \Dd = g d_2^2,}\\
& F \cdot \psi_1 = 1, && F \cdot \psi_2 = 2g, && F \cdot \psi_3 = 1,\\
& F \cdot \delta_{0:12} = 1, && F \cdot \delta_{0:23} = 1.
\end{align*}
\end{lemma}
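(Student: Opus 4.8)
I want to prove Lemma \ref{lem:family_F2_2neg}, which computes the intersection numbers of the family $F$ — obtained by fixing a generic 2-pointed curve $(C; x_1, x_3)$ of genus $g$ and letting $x_2$ move along $C$ — with $\Dd$ and the relevant generators of $\Pic(\Mgbar{3})$. Here $d_1 = g + b - 1 > 0$ and $d_2, d_3 < 0$, with $b_j := -d_j$. The secondary intersection numbers ($\psi_j$, $\delta_{0:12}$, $\delta_{0:23}$) are standard: $\psi_2 = 2g$ reflects that $x_2$ sweeps out the whole curve (degree of $\omega_C$ is $2g-2$, plus the contributions at the two base points where $x_2$ collides with $x_1$ or $x_3$), while $\psi_1 = \psi_3 = 1$ and the two $\delta_{0:2j} = 1$ come from exactly those collisions $x_2 \to x_1$ and $x_2 \to x_3$. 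These follow by the same standard techniques invoked in the proofs of Lemmas \ref{lem:family_2_pos} and \ref{lem:family_2_neg}, so the real content is the computation $F \cdot \Dd = g d_2^2$.

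For $F \cdot \Dd$, I would mirror the degeneration strategy used in Lemmas \ref{lem:family_2_pos}, \ref{lem:family_2_neg}, and especially \ref{lem:family_F2_2neg}'s two-negative-point analogues: degenerate $C$ to a comb curve $R \cup_{y_1} E_1 \cup \dots \cup_{y_g} E_g$, a rational spine $R$ with $g$ generic elliptic tails, with the fixed points $x_1$ and $x_3$ lying on $R$. By \cite[Section 9]{bib:eisenbud-harris-cuspidal-rational-curves} the variety of limit linear series is reduced on such a curve, so counting members of $F$ in $\Dd$ reduces to counting limit $g^1_{d_1}$'s satisfying the prescribed vanishing: containing the effective part $d_1 x_1$ and admitting a section vanishing to order $b_2$ at $x_2$ and order $b_3$ at $x_3$. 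The key point, as in Lemma \ref{lem:family_2_neg}, is that $x_2$ must lie on one of the elliptic tails $E_j$ (by \cite[Proposition 1.1]{bib:eisenbud-harris-lls}), and a dimension count on the $R$-aspect forces its vanishing sequence at $y_j$ to be maximal, which in turn forces $a^{\ell_{E_j}}(y_j)$ to be large. Combined with the requirement $a^{\ell_{E_j}}(x_2) \geq (0, b_2)$, this should force equality everywhere and reduce the condition to: $x_2 - y_j$ is a nontrivial $b_2$-torsion point in $\Pic^0(E_j)$.

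The arithmetic to watch is that the answer is $g d_2^2 = g b_2^2$, \emph{not} $g(b_2^2 - 1)$ as in the one-negative-point Lemma \ref{lem:family_2_neg}. The discrepancy is exactly the trivial torsion point, i.e.\ the case $x_2 = y_j$; so unlike in Lemma \ref{lem:family_2_neg}, here the limit where $x_2$ collides with a node $y_j$ must genuinely contribute a point of $\Dd \cap F$. I expect this boundary contribution is precisely what is recorded by the nonzero $\delta_{0:23}$ or by the structure of the moving family, and reconciling it cleanly is where the analysis must be done carefully: I would check that when $x_2 = y_j$ (the trivial torsion case) the compatibility conditions \emph{are} satisfiable because the presence of the second negative point $x_3$ on $R$ relaxes the constraint that excluded the trivial point in Lemma \ref{lem:family_2_neg}. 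Thus each of the $g$ elliptic tails contributes $b_2^2 = d_2^2$ possibilities for $x_2$ (all $b_2^2$ torsion points, including the trivial one), giving $F \cdot \Dd = g d_2^2$.

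**The main obstacle.** The hard part is the bookkeeping that produces $d_2^2$ rather than $d_2^2 - 1$: I must verify that the full-torsion count (including $x_2 = y_j$) is correct and that no spurious solutions arise from the two negative points interacting on the spine. The delicate step is the dimension count for the $R$-aspect $\ell_R$, which now carries two ramification conditions (at $x_1$ from the positive point and at $x_3$ from a negative point) plus the full-ramification/containment condition, and applying Remark \ref{rmk:Schubert_fully_ramified} to pin down $\ell_R$ uniquely while correctly accounting for the vanishing forced at $x_3$. Once the Schubert calculus confirms uniqueness of $\ell_R$ and the torsion interpretation on $E_j$, the count follows, and I would conclude by noting the remaining intersection numbers follow by the standard arguments already used above.
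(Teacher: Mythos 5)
The paper proves this lemma by a completely different route from yours: it degenerates $C$ to $E \cup_y C'$ with $x_1$ on an elliptic component $E$ and $x_3$ on the genus-$(g-1)$ component $C'$, splits the family as $F = F_E + F_{C'}$, shows that $F_E \cdot \Dd = F_E' \cdot \D_{(d_1;\, d_2,\, d_3 - g + 1)} = d_2^2$ and $F_{C'} \cdot \Dd = F_{C'}' \cdot \D_{(d_1 - 1;\, d_2,\, d_3)}$, and concludes by induction on $g$. In that setup the elliptic count is the torsor equation $b_2 x_2 \equiv d_1 x_1 - (d_1 - b_2) y$ on $E$, which has exactly $b_2^2$ solutions, none of which needs to be discarded; so the ``$d_2^2$ versus $d_2^2 - 1$'' issue you rightly identify as the crux never arises in the paper's argument.

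Your comb argument breaks at precisely the two points you flag, and your proposed fixes are wrong. First, the claim that $x_2$ must lie on an elliptic tail fails here. In Lemma \ref{lem:family_2_neg} it follows from the Pl\"ucker bound on $R \cong \P^1$: the ramification weights forced on the $R$-aspect sum to $(d_1 - 1) + (b - 1) + g = 2d_1 - 1 > 2d_1 - 2$, so no such pencil exists for \emph{any} position of $x_2 \in R$. With two negative points, however, the condition defining $\Dd$ is not a sum of ramification conditions (it asks for \emph{one} section vanishing at both $x_2$ and $x_3$), and the weights now sum to exactly $(d_1 - 1) + (b_2 - 1) + (b_3 - 1) + g = 2d_1 - 2$: the Wronskian of the spine aspect has precisely enough zeros left to be ramified at all $g$ nodes, and spine solutions genuinely exist --- for $g = 1$ one checks by hand that the pencil $\langle d_1 x_1,\, b_2 x_2 + b_3 x_3 \rangle$ is ramified at the node $y_1$ for exactly one interior position of $x_2 \in R$. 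Second, on each tail the analysis is unchanged from Lemma \ref{lem:family_2_neg}: one is still forced to $\L_{E_j} = \O_{E_j}(d_1 y_j)$ and $b_2 x_2 \equiv b_2 y_j$ with $x_2$ a \emph{smooth} point of the comb, hence $x_2 \neq y_j$, so each tail contributes $b_2^2 - 1$ points, not $b_2^2$. The ``trivial torsion point'' $x_2 = y_j$ is a node of the comb, where the family meets a deeper boundary stratum; counting a contribution there would require a stable-reduction and intersection-multiplicity analysis you do not supply, and (at least for $g = 1$, where everything is checkable explicitly) that contribution is zero. The correct comb bookkeeping is $g(b_2^2 - 1)$ from the tails plus $g$ from the spine, totaling $g d_2^2$; as written, your argument either outputs the wrong number $g(d_2^2 - 1)$ or reaches the right number by an incorrect mechanism.
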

\begin{proof}
This is similar to the proof of Lemma \ref{lem:family_F2_2pos}: Let $C = E
\cup_y C'$ again with now $x_1 \in E$ and $x_3 \in C'$. Then $F = F_E +
F_{C'}$ with
\begin{align*}
F_E \cdot \Dd &= F_E' \cdot \D_{(d_1;d_2,d_3-g+1)} = d_2^2 \quad \text{and}\\
F_{C'} \cdot \Dd &= F_{C'}' \cdot \D_{(d_1-1;d_2,d_3)}.
\end{align*}
The only difference to before is that now $\ell_{C'}$ has a $b_2$-fold base
point at $y$ in case $x_2 \in E$. The result follows by induction.
\end{proof}


\begin{lemma}
\label{lem:family_F2_i_2neg}
Let $\left( C_1;\, y \right)$ be a generic $1$-pointed curve of genus $i \geq
1$, $\left( C_2;\, x_1,\, x_3,\, y \right)$ a generic 3-pointed curve of genus
$g - i$, $\left( C = C_1 \cup_y C_2;\, x_1,\, x_3 \right)$ the $2$-pointed
curve obtained by gluing $C_1$ and $C_2$ at $y$, and $F$ the family in
$\Mgbar{3}$ obtained by letting a point $x_2$ move along $C_1$. Then we have
\begin{align*}
& \mathrlap{F \cdot \Dd = i (d_2^2 - 1),}\\
& F \cdot \psi_2 = 2i - 1, && F \cdot \delta_{i:2} = -1, && F \cdot
\delta_{i:\emptyset} = 1.
\end{align*}
These formulas also hold for $d_3 = 0$.
\end{lemma}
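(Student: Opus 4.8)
The plan is to mirror the limit-linear-series analysis of Lemma \ref{lem:family_F2_i_2pos}, with the simplification that, since the moving point $x_2$ is now negative, no case distinction and hence no correction term arises. As usual only the number $F \cdot \Dd$ requires genuine work; the numbers $F \cdot \psi_2 = 2i - 1$, $F \cdot \delta_{i:2} = -1$ and $F \cdot \delta_{i:\emptyset} = 1$ are obtained by the same standard techniques used for the genus-$i$-tail families above.

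First I would set up the limit $g^1_{d_1}$ on $C = C_1 \cup_y C_2$, noting that $S_+ = \{ 1 \}$, so the pencil has degree $d_1 = g + b - 1$. Since it must contain the divisor $d_1 x_1$ and $x_1 \in C_2$, the corresponding member of $\ell_{C_2}$ is forced to be $d_1 x_1$ itself and hence has order $0$ at the node $y$; the compatibility relation at the node then forces $a_1^{\ell_{C_1}}(y) = d_1$, that is, $\ell_{C_1}$ contains $d_1 y$. Writing $a_0 := a_0^{\ell_{C_1}}(y)$ and peeling off the base divisor $a_0 y$, the $C_1$-aspect becomes a $g^1_{d_1 - a_0}$ containing $(d_1 - a_0) y$ and possessing a section vanishing to order $b_2$ at $x_2$.

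The heart of the argument is a dimension count on $C_1$. As $x_2$ ranges over a curve and $\Dd$ is a divisor, we have $F \not\subseteq \Dd$, so the contribution can only be finite when $d_1 - a_0 - b_2 = i - 1$, forcing $a_0 = (g - i) + b_3$: a larger value of $a_0$ leaves no section for generic $x_2$, while a smaller one makes $h^0$ positive for every $x_2$. With this value the peeled $C_1$-aspect realizes exactly the condition $(C_1;\, y,\, x_2) \in \D_{(i + b_2 - 1;\, -b_2)}$, a two-pointed problem on the genus-$i$ curve $C_1$. Applying Lemma \ref{lem:family_2_neg} on $C_1$ with $b = b_2$ then yields precisely $i(b_2^2 - 1) = i(d_2^2 - 1)$ admissible positions of $x_2$.

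It remains to confirm that each such $x_2$ extends to an honest, reduced limit series, i.e. that a compatible $C_2$-aspect exists and is unique, and this node bookkeeping is where I expect the main subtlety to lie. Here I would observe that $\ell_{C_2}$ is the pencil spanned by $d_1 x_1$ and a second section $\tau$ vanishing to order $\geq i + b_2 - 1$ at $y$ and $\geq b_3$ at $x_3$; such a $\tau$ lies in $\linsys{d_1 x_1 - (i + b_2 - 1) y - b_3 x_3}$, a linear system of degree exactly $g - i$ on a curve of genus $g - i$. Since a line bundle of degree equal to the genus always has $h^0 \geq 1$, and generically $h^0 = 1$, the section $\tau$, and hence $\ell_{C_2}$, exists and is unique, and one checks that the node compatibilities hold with equality. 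This confirms that the count is governed entirely by the genus-$i$ factor and equals $i(d_2^2 - 1)$. The case $d_3 = 0$ (so $b_3 = 0$) is identical, since $x_3$ then imposes no vanishing and the degree-equals-genus argument is unaffected.
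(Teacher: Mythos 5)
Your reduction---peel off the base divisor $a_0 y$ from the $C_1$-aspect and count via Lemma \ref{lem:family_2_neg} applied on $C_1$ with $b = b_2$---is exactly the paper's route, and your final count $i(d_2^2-1)$ is correct. But the step that pins down $a_0 := a_0^{\ell_{C_1}}(y)$ contains a genuine gap, and it sits at the crux of the argument. You exclude $a_0 < g - i + b_3$ by asserting $F \not\subseteq \Dd$ ``since $x_2$ ranges over a curve and $\Dd$ is a divisor.'' This is not a valid principle (a curve inside the boundary can perfectly well lie in $\Dd$), and here it is circular: whether $F \subseteq \Dd$ is precisely what is at stake. Indeed, since the $C_2$-aspect and the compatibility data at $y$ do not involve $x_2$ at all, if a limit series with $a_0 < g - i + b_3$ and a compatible $C_2$-aspect existed for one position of $x_2$, then your own observation that $h^0\big(C_1,\, (d_1 - a_0)y - b_2 x_2\big) \geq 1$ for \emph{every} $x_2$ would produce (via the Eisenbud--Harris smoothing result for pencils) a point of $\Dd$ for every $x_2$, i.e. $F \subseteq \Dd$. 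So small $a_0$ can only be excluded by an argument on $C_2$, which your proposal never makes. This is the key step of the paper's proof: the section of $\ell_{C_2}$ vanishing to order $b_3$ at $x_3$ must vanish to order $a_1 := a_1^{\ell_{C_2}}(y)$ at $y$ (otherwise the matching section of $\ell_{C_1}$ would be fully ramified at $y$ while also vanishing at $x_2$, which is absurd), whence one needs $h^0(C_2,\, d_1 x_1 - a_1 y - b_3 x_3) \geq 1$; by Riemann--Roch and genericity of $x_1,\, x_3,\, y$ on $C_2$ this $h^0$ equals $(b_2 + i - a_1)_+$, forcing $a_1 \leq b_2 + i - 1$, equivalently $a_0 \geq g - i + b_3$.

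The missing piece is small, and you already hold the right tool: in your last paragraph you note that $\O_{C_2}\big(d_1 x_1 - (i + b_2 - 1)y - b_3 x_3\big)$ has degree $g - i$ equal to the genus of $C_2$, so $h^0 \geq 1$ (existence and generic uniqueness of the $C_2$-aspect at the critical value---a point the paper leaves implicit, so this is a welcome addition). The same computation shows that for $a_1 \geq i + b_2$ the degree drops below the genus, so for the generic points $x_1,\, x_3,\, y$ on $C_2$ there is no such section; that single sentence is what rules out $a_0 < g - i + b_3$, and with it $F \subseteq \Dd$. As written, however, your proof only establishes the dichotomy ``either $F \cdot \Dd = i(d_2^2 - 1)$ or $F \subseteq \Dd$,'' which is not the lemma.
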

\begin{proof}
Let $\ell = (\ell_{C_1},\, \ell_{C_2})$ be a limit $g^1_{d_1}$ on $C$
satisfying the given vanishing conditions. Then $\ell_{C_2}$ must include the
divisor $d_1 x_1$, so $a_0^{\ell_{C_2}}(y) = 0$. The section of $\ell_{C_2}$
vanishing to order $b_3$ at $x_3$ must also vanish to order $a_1 :=
a_1^{\ell_{C_2}}(y)$ at $x_1$: otherwise the corresponding section of
$\ell_{C_1}$ would have to be fully ramified at $y$ while at the same time
vanishing to order $b_2$ at $x_2$, which is absurd. We thus need
\begin{equation*}
h^0( C_2,\, d_1 x_1 - a_1 y - b_3 x_3 ) = b_2 + i - a_1 \geq 1 \iff a_1 \leq
b_2 + i - 1,
\end{equation*}
where we used Riemann-Roch and the genericity of the points on $C_2$. By
compatibility, $a_0^{\ell_{C_1}}(y) \geq g - i + b_3$, and thus $\ell_{C_1} -
(g - i + b_3) y$ is a $g^1_{i+b_2-1}$ which is fully ramified at $y$ and
vanishes to order $b_2$ at $x_2$. By Lemma \ref{lem:family_2_neg} there are $i
(d_2^2 - 1)$ possibilities for $x_2$.
\end{proof}


We now finally consider the situation $n = 4$ with $d_1,\, d_2 > 0$ and
$d_3,\, d_4 < 0$. We write $b_j := -d_j$ for $j = 3,\, 4$ and $b := b_3 +
b_4$, so that $d := d_1 + d_2 = g + b - 1$.

\begin{lemma}
\label{lem:family_F_13_on_i}
Let $\left( C_1;\, x_1,\, x_3 \right)$ be a generic $2$-pointed curve of
genus $i$ with $1 \leq i \leq g$, $\left( C_2;\, x_2,\, x_4,\, y \right)$ a
generic $3$-pointed curve of genus $g-i$, and let $F$ be the family in
$\Mgbar{4}$ obtained by gluing $y$ to a moving point of $C_1$. Then we have
\begin{align*}
& \mathrlap{F \cdot \Dd = i (d_1 + d_3 - i + 1)^2 - (i - d_1)_+,}\\
& F \cdot \psi_1 = 1, && F \cdot \psi_3 = 1,\\
& F \cdot \delta_{i:13} = -2i, && F \cdot \delta_{i:1} = 1, && F \cdot
\delta_{i:3} = 1.
\end{align*}
\end{lemma}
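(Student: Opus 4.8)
The plan is to follow the template established in the preceding lemmas, particularly Lemmas~\ref{lem:family_F2_i_2pos} and \ref{lem:family_G_i:12_*}, computing $F \cdot \Dd$ by counting limit $g^1_{d_1}$'s (where $d_1$ here denotes the degree $d = d_1 + d_2 = g + b - 1$) on the family member $C = C_1 \cup_y C_2$ as the attachment point $y$ moves along $C_1$. I would write $\ell = (\ell_{C_1},\, \ell_{C_2})$ for such a limit linear series, with the prescribed vanishing $d_1 x_1,\, d_2 x_2$ on the positive side and vanishing to orders $b_3,\, b_4$ at $x_3,\, x_4$ on the negative side. First I would locate the marked points: here $x_1,\, x_3$ sit on $C_1$ (of genus $i$) and $x_2,\, x_4$ sit on $C_2$ (of genus $g - i$), with the moving node $y$ on $C_1$.

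\textbf{Dimension count and the case split.}
Next I would run the dimension count on the $C_2$-aspect exactly as in Lemma~\ref{lem:family_F2_i_2pos}. The space of $g^1_d$'s on $C_2$ with the required vanishing at $x_2,\, x_4$ has expected dimension $\rho(g-i,\,1,\,d)$ minus the codimensions imposed by the conditions at $x_2$ and $x_4$; generically this forces an upper bound on $a_1^{\ell_{C_2}}(y)$, which via the compatibility relations yields a lower bound on $a_0^{\ell_{C_1}}(y)$. I expect the analysis to break, as before, into a case $a_0^{\ell_{C_2}}(y) < d_1$ versus $a_0^{\ell_{C_2}}(y) = d_1$; the former should contribute the ``bulk'' term and force $y$ to be a torsion-type point on $C_1$ (countable via Lemma~\ref{lem:family_2_pos}), while the boundary case $a_0 = d_1$ feeds back into an application of Lemma~\ref{lem:family_F2_2pos}. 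The appearance of the quantity $d_1 + d_3 - i + 1 = d_1 - b_3 - i + 1$ in the answer signals that the residual series on $C_1$, after stripping the forced base points coming from $y$, is a $g^1$ of degree $d_1 + d_3 - i + 1$ fully ramified at $x_1$, whose count over the moving $y$ is $i\bigl((d_1 + d_3 - i + 1)^2 - 1\bigr)$ in the generic case; the correction $(i - d_1)_+$ then arises precisely in the boundary case, matching the pattern of Lemma~\ref{lem:family_F2_i_2pos}.

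\textbf{Bookkeeping the forced base points.}
The main obstacle, and the place where this lemma differs genuinely from its predecessors, is that there are now \emph{two} negative points, $x_3$ on $C_1$ and $x_4$ on $C_2$, distributed across the two components. I expect the careful accounting of how the vanishing order $b_3$ at $x_3$ interacts with the base point structure of $\ell_{C_1}$ at $y$ to be delicate: the residual degree $d_1 + d_3 - i + 1$ is what remains after subtracting both the base locus forced by the compatibility at $y$ and the $b_3$-fold vanishing demanded at $x_3$. I would argue, paralleling the ``absurdity'' step in Lemma~\ref{lem:family_F2_i_2neg}, that the section vanishing to order $b_4$ at $x_4$ on $C_2$ cannot simultaneously be fully ramified at $y$, which pins down $a_1^{\ell_{C_2}}(y)$ and hence the exact base-point contribution on the $C_1$ side. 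Once the residual series on $C_1$ is correctly identified as a $g^1_{d_1 + d_3 - i + 1}$ with a full-ramification condition at $x_1$, Remark~\ref{rmk:Schubert_fully_ramified} gives uniqueness of each aspect and the torsion-point count over $y$ yields the stated $F \cdot \Dd$.

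\textbf{The auxiliary intersection numbers.}
Finally, the numbers $F \cdot \psi_1 = F \cdot \psi_3 = 1$, $F \cdot \delta_{i:13} = -2i$, and $F \cdot \delta_{i:1} = F \cdot \delta_{i:3} = 1$ are standard: since $y$ moves on the genus-$i$ component $C_1$ carrying $x_1,\, x_3$, the family meets $\delta_{i:13}$ in the self-intersection of the node section (giving $-2i$ by the usual degree-of-the-normal-bundle computation), while the transverse meetings that merge $y$ with $x_1$ or with $x_3$ account for the two $\delta_{i:1}$, $\delta_{i:3}$ contributions and, by the $\pi^*\psi$ pullback formula of Lemma~\ref{lem:pullback_forgetful}, for the $\psi_1,\, \psi_3$ intersections. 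These I would simply assert as obtained ``by standard techniques,'' consistent with the phrasing used in the earlier lemmas of this section.
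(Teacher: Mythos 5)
Your overall architecture matches the paper's: split according to whether $a_0^{\ell_{C_2}}(y) < d_1$ or $a_0^{\ell_{C_2}}(y) = d_1$ (in the paper these cases are forced by the numerics, namely $d_1 + d_3 \geq i$ versus $d_1 + d_3 < i - 1$, using $h^0(C_2,\, d_2 x_2 - b_4 x_4) = 0$ resp. $h^0(C_1,\, d_1 x_1 - b_3 x_3) = 0$), and reduce each case to an earlier family; your handling of the boundary case via Lemma \ref{lem:family_F2_2pos} and of the auxiliary intersection numbers is consistent with the paper. However, the bulk case is reduced to the wrong lemma, and this gives the wrong count. When $a_0^{\ell_{C_2}}(y) < d_1$, compatibility gives $a^{\ell_{C_1}}(y) \geq (d_2,\, g - i - d_4)$; removing the base point $d_2 y$ leaves a $g^1_{d_1}$ on $C_1$ containing $d_1 x_1$, vanishing to order $b_3$ at the \emph{fixed} point $x_3$ and to order $d_1 + d_3 - i + 1$ at the \emph{moving} point $y$. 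This is exactly the three-point configuration of Lemma \ref{lem:family_F2_2neg} (one positive point, two negative points, the moving point negative), applied with $\d = (d_1;\, d_2 + d_4 - g + i,\, d_3)$ on a curve of genus $i$, and it yields $i\,(d_1 + d_3 - i + 1)^2$ --- with no ``$-1$''. Your reduction to Lemma \ref{lem:family_2_pos}, i.e.\ to a torsion-point count giving $i\bigl((d_1 + d_3 - i + 1)^2 - 1\bigr)$, is precisely the step that fails: the per-tail count $k^2 - 1$ of Lemmas \ref{lem:family_2_pos} and \ref{lem:family_2_neg} is valid only when there is no second negative point on the component carrying the moving point, and here the presence of $x_3$ on $C_1$ changes it to $k^2$ (compare $g d_2^2$ in Lemma \ref{lem:family_F2_2neg} with $g(b^2 - 1)$ in Lemma \ref{lem:family_2_neg}). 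With your bulk term the total is off by $i$ in the main regime $d_1 + d_3 \geq i$, where $(i - d_1)_+ = 0$ and the stated answer is exactly $i\,(d_1 + d_3 - i + 1)^2$.

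Two smaller inaccuracies: the residual series on $C_1$ is a $g^1_{d_1}$, not a $g^1$ of degree $d_1 + d_3 - i + 1$ --- that quantity is its vanishing order at $y$, which is what gets squared; and the correction is not an extra contribution ``arising'' in the boundary case with the pattern of Lemma \ref{lem:family_F2_i_2pos} (there the analogous term enters with a \emph{plus} sign, $+(i - d_1)_+$), but is already built into the formula $i(\cdot)^2 - (i - d_1)_+$ obtained by quoting Lemma \ref{lem:family_F2_2pos} in that case. Note also that the two cases are mutually exclusive, being determined by the sign of $d_1 + d_3 - i + 1$, and the borderline case $d_1 + d_3 = i - 1$ contributes $0$, consistently with both formulas.
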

\begin{proof}
Let $\ell = (\ell_{C_1},\, \ell_{C_2})$ be a limit $g^1_d$ on $C$
satisfying the given vanishing conditions. Then $\ell$ contains the divisor
$(d_1 x_1 + d_2 y,\, d_1 y + d_2 x_2)$.

Suppose first that $d_1 + d_3 \geq i$, or $d_2 + d_4 \leq g - i - 1$. Then
the base locus of $\ell_{C_2}$ cannot contain $d_1 y$, since $h^0(C_2,\, d_2
x_2 - b_4 x_4) = 0$ by Riemann-Roch and genericity.
Hence $a_1^{\ell_{C_2}}(y) = d_1$, and by a dimension count
$a_0^{\ell_{C_2}}(y) \leq i - d_3 - 1$, with equality attained for a unique
$g^1_d$. Thus $a^{\ell_{C_1}}(y) \geq (d_2,\, g - i - d_4)$, and we
can apply Lemma \ref{lem:family_F2_2neg} with $\d = (d_1;\, d_2 + d_4 - g +
i,\, d_3)$ to find
\begin{equation*}
F \cdot \Dd = i (d_2 + d_4 - g + i)^2.
\end{equation*}

If $d_1 + d_3 < i - 1$, then $h^0(C_1,\, d_1 x_1 - b_3 x_3) = 0$, so $d_2
y$ cannot be in the base locus of $\ell_{C_1}$, forcing $a_1^{\ell_{C_1}}(y) =
d_2$ and thus $a_0^{\ell_{C_2}}(y) = d_1$.
As in the proof of Lemma \ref{lem:family_F2_i_2neg}, we find that
$a^{\ell_{C_2}}(y) \leq (d_1,\, i - d_3 - 1)$, so $a^{\ell_{C_1}}(y) \geq (g -
i - d_4,\, d_2)$. Applying Lemma \ref{lem:family_F2_2pos} with $\d = (d_2 +
d_4 - g + i,\, d_1;\, d_3)$ then gives
\begin{equation*}
F \cdot \Dd = i (d_2 + d_4 - g + i)^2 - (i - d_1)_+.
\end{equation*}

Finally, if $d_1 + d_3 = i - 1$ we obtain $a^{\ell_{C_2}}(y) = (d_1,\, d_1 +
1)$ and $\ell_{C_2} - d_1 y$ must have a section vanishing to order $1$ at $y$
and $b_4$ at $x_4$. Since $h^0(C_2,\, d_2 x_2 - y - b_4 x_4) = 0$, this is
impossible, so in this case $F \cdot \Dd = 0$, which is consistent with the
other two formulas.
\end{proof}


\section{Computation of the boundary coefficients}
\label{sec:boundary}
For computing the boundary coefficients of $\Dd$ we will use a bootstrapping
approach, considering first the easiest non-trivial case $n = 2$, then
generalizing to the case $n > 2$ with exactly one $d_j < 0$, and finally
tackling the most general situation.

\subsection{The case $n = 2$}
For ease of notation, we will write $\d = (d_1,\, d_2) = (g + b - 1,\, -b)$
with $b \geq 1$ and denote the corresponding divisor by $\D_{(g+b-1,b)} =:
\D_b$.

\begin{proposition}
\label{prop:case_n=2}
The class of $\D_b$ is given by
\begin{equation*}
\begin{split}
\left[ \D_b \right] =& -\lambda + \binom{g + b}{2} \psi_1 + \binom{b}{2}
\psi_2 - 0 \cdot \delta_0 - \binom{g + 1}{2} \delta_{0:12}\\
& -\sum_{i=1}^{g-1} \left[ \binom{g - i + b}{2} \delta_{i:1} + \binom{g - i +
1}{2} \delta_{i:12} \right].
\end{split}
\end{equation*}
\end{proposition}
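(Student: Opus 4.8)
The plan is to determine the class of $\D_b = \D_{(g+b-1,\,-b)}$ by writing it in the general form of \eqref{eq:D_coefficients} and fixing the unknown coefficients one by one. By Proposition \ref{prop:main_coefficients} we already know the main coefficients: $a = -1$, $c_1 = \binom{d_1+1}{2} = \binom{g+b}{2}$, and $c_2 = \binom{d_2+1}{2} = \binom{-b+1}{2} = \binom{b}{2}$ (using the convention for binomials with negative top entry). So we write
\begin{equation*}
\left[ \D_b \right] = -\lambda + \binom{g+b}{2} \psi_1 + \binom{b}{2} \psi_2
+ b_0 \delta_0 + b_{0:12} \delta_{0:12}
+ \sum_{i=1}^{g-1} \left( b_{i:1} \delta_{i:1} + b_{i:12} \delta_{i:12} \right),
\end{equation*}
and the task reduces to pinning down the boundary coefficients $b_0$, $b_{0:12}$, $b_{i:1}$ and $b_{i:12}$ for $1 \leq i \leq g-1$. (Here $\delta_{i:2}$ is the same divisor as $\delta_{g-i:1}$, so I only list representatives containing the point $1$, consistent with the conventions fixed in the Notation.)

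\emph{The coefficient $b_0$} is forced to be $0$: this follows from the vector bundle computation of Section \ref{sec:main_coefficients}, where the Grothendieck--Riemann--Roch calculation produces no $\delta_0$ contribution, and can be confirmed by Lemma \ref{lem:family_elliptic_tail}, whose family $F$ satisfies $F \cdot \delta_0 = 12$, $F \cdot \lambda = 1$, $F \cdot \delta_{g-1:12} = -1$, and $F \cdot \D_b = 0$; plugging these into the class expression yields a linear equation forcing $12 b_0 = 0$ once the other participating coefficients are known.

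**Solving for the boundary coefficients via test curves.**

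The main device is intersection with the test families of Section \ref{sec:test_curves}. For each family $F$, the identity $F \cdot \left[ \D_b \right] = F \cdot \D_b$ expands, using the known intersection numbers of $F$ against the generators of $\Pic(\Mgbar{2})$, into a single linear equation in the unknown coefficients. First I would use Lemma \ref{lem:family_2_pos}: its family meets $\psi_1$, $\psi_2$ and $\delta_{0:12}$ with numbers $2g-1$, $1$, $1$, and $F \cdot \D_b = g(d_1^2 - 1)$; substituting the known $c_1, c_2$ and solving gives $b_{0:12}$. Next, the families of Lemma \ref{lem:family_G_i:12_*}, indexed by $i \geq 2$, each touch only $\D_b$ and $\delta_{g-i:12}$, with $F \cdot \D_b = i(i^2-1)$ and $F \cdot \delta_{g-i:12} = 2-2i$; this isolates the coefficients $b_{i:12}$ directly (matching the predicted $\binom{g-i+1}{2}$, which is Cukierman's Weierstra\ss{} pattern from \eqref{eq:weierstrass_divisor_class}). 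Finally, to reach the coefficients $b_{i:1}$, I would deploy Lemma \ref{lem:family_2_neg} together with appropriate further families that meet $\delta_{i:1}$ nontrivially; each produces one more linear relation, and after substituting the already-determined coefficients, $b_{i:1}$ is read off as $\binom{g-i+b}{2}$.

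**The main obstacle.**

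The routine part is the linear algebra of back-substitution; the real content, and the part I expect to be delicate, is establishing that the intersection numbers $F \cdot \D_b$ in Section \ref{sec:test_curves} are correct, i.e.\ the limit-linear-series enumerations. The subtlety is twofold: first, one must verify that $\D_b$ meets each family \emph{transversally} so that the geometric count equals the intersection number with no hidden multiplicities (this is where the reducedness result on comb curves, cited in the proof of Lemma \ref{lem:family_2_pos}, is essential); second, one must be certain that the families chosen collectively meet \emph{every} boundary divisor $\delta_{i:1}$ and $\delta_{i:12}$ so that no coefficient is left undetermined, and that the resulting linear system is nondegenerate. A particular boundary case to watch is $i = 1$, where the genus-$i$ component is elliptic and Lemma \ref{lem:family_G_i:12_*} (which assumes $i \geq 2$) does not apply; the coefficient $b_{1:12}$ must instead be extracted from a separate family such as that of Lemma \ref{lem:family_elliptic_tail}, and one must check consistency with the closed-form $\binom{g}{2}$ obtained by extrapolating the $i \geq 2$ formula. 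Once all these enumerations are secured, the proposed class follows by collecting the solved coefficients into the stated expression.
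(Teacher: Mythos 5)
Your overall strategy---fixing $a$, $c_1$, $c_2$ by Proposition \ref{prop:main_coefficients} and solving for the boundary coefficients via the test families of Section \ref{sec:test_curves}---is the paper's, and two of your steps are sound: Lemma \ref{lem:family_2_pos} does determine $b_{0:12} = -\binom{g+1}{2}$ (a valid alternative to the paper, which gets it from Lemma \ref{lem:family_G_i:12_*} at $i = g$), and Lemma \ref{lem:family_G_i:12_*} does isolate $b_{i:12}$ for $i \leq g-2$. But there is a genuine gap exactly where the paper has to work hardest: the coefficients $b_{i:1}$, $b_{g-1:12}$ and $b_0$. First, your primary justification of $b_0 = 0$ is invalid: the Grothendieck--Riemann--Roch computation of Section \ref{sec:main_coefficients} takes place on the open locus $\Mgn$ (it needs $\lambda_g = 0$, which holds only there, to produce the section $\Kscr$), where every boundary class restricts to zero, so it can say nothing about $b_0$---this is precisely why Proposition \ref{prop:main_coefficients} claims only $a$ and the $c_j$. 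Second, with that argument gone, your fallback is circular: Lemma \ref{lem:family_elliptic_tail} gives the single relation $a + 12 b_0 - b_{g-1:12} = 0$, one equation in the \emph{two} unknowns $b_0$ and $b_{g-1:12}$. Note also that the coefficient left undetermined by Lemma \ref{lem:family_G_i:12_*} is $b_{g-1:12}$ (dual to the excluded case $i = 1$, with extrapolated value $-\binom{2}{2} = -1$), not $b_{1:12} = -\binom{g}{2}$, which that lemma does cover at $i = g-1$; so you cannot use the elliptic-tail family both to ``confirm'' $b_0 = 0$ and to extract the missing boundary coefficient.

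Third, your route to the $b_{i:1}$ fails as stated: the family of Lemma \ref{lem:family_2_neg} meets only $\psi_1$, $\psi_2$ and $\delta_{0:12}$, so it never touches any $\delta_{i:1}$ and yields only a redundant consistency check, while the ``appropriate further families'' you allude to are never named---and they are the crux. The paper's resolution is a specific dependency chain through the $n = 3$ families specialized to the two-pointed setting: Lemma \ref{lem:family_F2_i_2pos} with $d_2 = 0$ meets $\delta_{i:1}$ (with multiplicity $-1$) and $\delta_{i:\emptyset} = \delta_{g-i:12}$, giving $b_{i:1}$ for $i = 2, \dots, g-1$ from the already-known $b_{g-i:12}$; then Lemma \ref{lem:family_F2_i_2neg} with $d_3 = 0$ and $i = 1$ uses $b_{g-1:1}$ to pin down $b_{g-1:12} = -1$; then Lemma \ref{lem:family_F2_i_2pos} at $i = 1$ gives $b_{1:1}$; and only then does the elliptic-tail relation yield $b_0 = (b_{g-1:12} - a)/12 = 0$. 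Without some equivalent of this chain---in particular, without ever invoking Lemmas \ref{lem:family_F2_i_2pos} and \ref{lem:family_F2_i_2neg}---your linear system is underdetermined and the proof does not close.
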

\begin{proof}
From Section \ref{sec:main_coefficients} we know that $a = -1$, $c_1 =
\binom{g + b}{2}$ and $c_2 = \binom{-b + 1}{2} = \binom{b}{2}$. Intersecting
$\D_b$ with the family from Lemma \ref{lem:family_G_i:12_*}, we find that
$b_{g-i:12} = -\binom{i + 1}{2}$, or dually $b_{i:12} = -\binom{g - i + 1}{2}$
for $i = 0,\, \dots,\, g-2$. From the family in Lemma
\ref{lem:family_F2_i_2pos} (taking $d_2 = 0$), we get
\begin{equation*}
b_{i:1} = (2i - 1) c_1 + b_{g-i:12} - i ((g + b - 1)^2 - 1) = -\binom{g - i
+ b}{2} \quad \text{for } i = 2,\, \dots,\, g-1,
\end{equation*}
and Lemma \ref{lem:family_F2_i_2neg} with $d_3 = 0$ gives $b_{g-1:12}
= -1$. Using Lemma \ref{lem:family_F2_i_2pos} once more, we get the value for
$b_{1:1}$, while finally Lemma \ref{lem:family_elliptic_tail} leads to $b_0 =
(b_{g-1:12} - a)/12 = 0$.
\end{proof}

\begin{remark}
\label{rmk:weierstrass_divisor}
Note that when we pull back from $\Mgbar{1}$ the Weierstra\ss{} divisor $\Wg$,
whose class is given in \eqref{eq:weierstrass_divisor_class}, we get by Lemma
\ref{lem:pullback_forgetful} that
\begin{equation*}
\begin{split}
\left[ \pi_2^* \Wg \right] &= -\lambda + \binom{g + 1}{2} \psi_1 - \binom{g +
1}{2} \delta_{0:12} - \sum_{i=1}^{g-1} \binom{g - i + 1}{2} \big( \delta_{i:1}
+ \delta_{i:12} \big) \\
&= \left[ \D_1 \right]
\end{split}
\end{equation*}
as expected. Furthermore it is easy to see that a 2-pointed curve $(C = C'
\cup_y \P^1;\, x_1,\, x_2)$ with $x_1,\, x_2 \in \P^1$ is in $\D_b$ exactly
when it has a limit $g^1_{g+b-1}$ whose $C'$-aspect satisfies
$a^{\ell_{C'}}(y) = (b - 1,\, g + b - 1)$, which is the case if and only if
$y$ is a Weierstra\ss{} point of $C'$. From Lemma
\ref{lem:pushforward_forgetful} we obtain accordingly
\begin{equation*}
\pi_{(12 \mapsto \bullet)*} (\left[ \D_b \right] \cdot \delta_{0:12}) =
-\lambda + \binom{g + 1}{2} \psi_\bullet - \sum_{i=1}^{g-1} \binom{g - i +
1}{2} \delta_{i:\bullet} = \left[ \Wg \right].
\end{equation*}
\end{remark}

\subsection{The case of exactly one negative $d_j$}
We now consider the next simplest case where exactly one of the $d_j$ is
negative (for definiteness, and without loss of generality, we take $d_n <
0$).

\begin{remark}
\label{rmk:pushdown}
Here and in the next section we will several times apply a ``pushdown''
argument which runs as follows: Let $j,\, k \in [n]$ be two indices such that
$d_j$ and $d_k$ have the same sign, and suppose that $\alpha \in
\Pic(\Mgnbar)$ is one of the basic divisor classes described in Section
\ref{ssec:picard_group} satisfying $\beta := \pi_{(jk \mapsto
\bullet)*}(\alpha \cdot \delta_{0:jk}) \neq 0$. Since then no other basis
element is mapped to $\beta$ and $\pi_{(jk \mapsto \bullet)*}(\left[ \Dd
\right] \cdot \delta_{0:jk}) = \left[ \D_{\d'} \right]$ with $\d'$ as in Lemma
\ref{lem:pushforward_Dd}, the coefficient of $\alpha$ in the expression for
$\Dd$ is the same as the coefficient of $\beta$ in the class of $\D_{\d'}$.
\end{remark}

\begin{proposition}
\label{prop:case_one_negative_dj}
If $d_j > 0$ for $j = 1,\, \dots,\, n-1$, then the class of $\Dd$ is given by
\begin{equation*}
\left[ \Dd \right] = -\lambda + \sum_{j=1}^n \binom{d_j + 1}{2} \psi_j - 0
\cdot \delta_0 - \sum_{i, S \subseteq [n-1]} \binom{\abs{d_S - i} + 1}{2}
\delta_{i:S}.
\end{equation*}
\end{proposition}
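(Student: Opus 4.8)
We already know from Proposition \ref{prop:main_coefficients} that $a = -1$ and $c_j = \binom{d_j+1}{2}$, so the only task is to determine the boundary coefficients $b_0$ and $b_{i:S}$. The plan is to determine these coefficients by induction on $n$, using the case $n = 2$ (Proposition \ref{prop:case_n=2}) as the base and the pushdown technique of Remark \ref{rmk:pushdown} together with the test-curve intersection numbers of Section \ref{sec:test_curves} to carry out the inductive step. Write $[\Dd]$ in the form \eqref{eq:D_coefficients}; we must identify $b_{i:S}$ for each admissible pair $(i,S)$ and show $b_0 = 0$.

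**The inductive mechanism.** The key observation is that since all of $d_1, \dots, d_{n-1}$ are positive, for any pair $j, k \in [n-1]$ the indices $d_j, d_k$ have the same sign, so Lemma \ref{lem:pushforward_Dd} and Remark \ref{rmk:pushdown} apply. First I would treat the coefficients $b_{i:S}$ with $\abs{S} \geq 2$ and $S \subseteq [n-1]$ (i.e.\ not containing the negative index $n$): by picking $j, k \in S$ and pushing down via $\pi_{(jk \mapsto \bullet)}$, the coefficient of $\delta_{i:S}$ in $[\Dd]$ equals the coefficient of $\delta_{i:S'}$ in $[\D_{\d'}]$, where $\d'$ has $n-1$ entries with $d_\bullet = d_j + d_k > 0$ and $d_{S'} = d_S$. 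By the inductive hypothesis this coefficient is $-\binom{\abs{d_S - i}+1}{2}$, as desired. The same pushdown, applied to pairs $j, k$ with one or both indices outside $S$, handles the remaining boundary classes indexed by larger sets, and the invariance of $d_S$ under the merging operation is what makes the formula stable under the induction.

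**Filling the gaps.** The pushdown argument collapses two positive points into one, so it cannot directly reach coefficients $b_{i:S}$ where $S$ has too few positive points to admit a pair $j,k$ — in particular the ``irreducible'' coefficient $b_0$, the classes $\delta_{i:\{j\}}$ with a single positive point, and $\delta_{i:\emptyset}$ and $\delta_{i:\{n\}}$. For these I would use the test curves directly. For $b_0$, intersect with the family of Lemma \ref{lem:family_elliptic_tail}, which gives a relation of the form $F \cdot \Dd = 0$ forcing $12 b_0 - \lambda\text{-contribution} = 0$ and hence $b_0 = 0$. For the singleton and small-$S$ coefficients I would intersect with the families of Lemmas \ref{lem:family_F2_i_2pos} and \ref{lem:family_F2_i_2neg} (allowing $d_2 = 0$ or $d_3 = 0$ as noted there), using the already-known $\psi$ and $\delta$ intersection numbers and the computed value of $F \cdot \Dd$ to solve linearly for the unknown boundary coefficient, exactly as in the proof of Proposition \ref{prop:case_n=2}.

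**The main obstacle.** The hard part is bookkeeping, not any single computation: I must verify that every admissible pair $(i,S)$ is reached by some combination of pushdowns and test curves, that the inductive formula $-\binom{\abs{d_S-i}+1}{2}$ is genuinely preserved under the merging $d_j, d_k \mapsto d_j + d_k$ (which it is, since only $d_S$ enters and this is unchanged), and that whenever I solve for a coefficient via a test curve the linear relation is non-degenerate (the $\delta$-class of interest appears with nonzero coefficient in $F$). The subtle point is that $S$ may or may not contain the negative index $n$; when $n \in S$ one must confirm that $\abs{d_S - i} = d_S - i$ or its absolute value is handled correctly, and that the formula $\binom{\abs{d_S-i}+1}{2}$ covers both the $S \subseteq S_+$ and $S \not\subseteq S_+$ cases uniformly in this one-negative-index setting. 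I expect the genericity inputs and Brill--Noether dimension counts underlying the test-curve lemmas to be the place where care is needed, but those are already packaged into the cited lemmas, so the remaining work is purely linear algebra in $\Pic(\Mgnbar)$.
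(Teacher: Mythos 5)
Your overall architecture is exactly the paper's: take $a = -1$ and $c_j = \binom{d_j+1}{2}$ from Proposition \ref{prop:main_coefficients}, then bootstrap the boundary coefficients via the pushdown of Remark \ref{rmk:pushdown} and Lemma \ref{lem:pushforward_Dd} from the base case of Proposition \ref{prop:case_n=2}, reserving the test families of Section \ref{sec:test_curves} for the coefficients that pushdown cannot reach. However, three details of your sketch are wrong or mis-sorted as written, and should be repaired (all repairs are available inside your own framework).

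First, pairs $j,\,k$ with exactly one index in $S$ are useless, not helpful: by Lemma \ref{lem:pushforward_forgetful}, $\delta_{i:S} \cdot \delta_{0:jk}$ then pushes forward to $0$, so no coefficient is transported; only pairs with both indices in $S$ or both outside $S$ can be used. Second, you have mis-assigned which coefficients genuinely need test curves: $b_{i:\emptyset}$ and $b_{i:\{n\}} = b_{g-i:[n-1]}$ are both reachable by pushdown (pairs disjoint from $\emptyset$, respectively pairs contained in $[n-1]$), and in fact $b_{i:\emptyset}$ \emph{must} be obtained this way \emph{before} any test-curve step, because the relation produced by the family of Lemma \ref{lem:family_F2_i_2pos},
\begin{equation*}
i(d_j^2 - 1) + (i - d_j)_+ \;=\; (2i-1)\, c_j \;-\; b_{i:j} \;+\; b_{i:\emptyset},
\end{equation*}
contains the \emph{two} unknowns $b_{i:j}$ and $b_{i:\emptyset}$; a single test curve cannot determine both, so "solve linearly for the unknown boundary coefficient" only works once $b_{i:\emptyset} = -\binom{i+1}{2}$ is known from pushdown. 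After this ordering is fixed, the only coefficients truly requiring a test curve are the positive singletons $b_{i:\{j\}}$, $j \in [n-1]$, exactly as in the paper. Third, your determination of $b_0$ fails as stated: the family of Lemma \ref{lem:family_elliptic_tail} lives in $\Mgbar{2}$, so for $n > 2$ you cannot intersect $\Dd$ with it directly, and even at $n = 2$ the correct relation is $a\,(F \cdot \lambda) + b_0\,(F \cdot \delta_0) + b_{g-1:12}\,(F \cdot \delta_{g-1:12}) = 0$, i.e.\ $12\, b_0 = b_{g-1:12} - a$; your version, which omits the boundary term, would yield $b_0 = \pm 1/12 \neq 0$, and the conclusion $b_0 = 0$ only emerges because $b_{g-1:12} = a = -1$ cancel. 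The clean fix --- the one the paper uses --- is to note that $\delta_0$ pushes forward to $\delta_0$, so $b_0$ is constant along your induction and equals its value at $n = 2$, which is $0$ by Proposition \ref{prop:case_n=2} (whose proof is where the elliptic-tail family, boundary term included, actually does its work).
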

\begin{proof}
We already know from Section \ref{sec:main_coefficients} that $a = -1$ and
$c_j = \binom{d_j + 1}{2}$.

For the $b_{0:jk}$ with $j,\, k \in [n-1]$, we can apply the pushdown
argument explained in Remark \ref{rmk:pushdown} to the divisor class
$\delta_{0:jk}$ itself, which gets mapped to $-\psi_\bullet$. Thus we have
$b_{0:jk} = -c_\bullet$, where $c_\bullet$ is the coefficient of
$\psi_\bullet$ in the expression for $\pi_{(jk \mapsto \bullet)*}\big(\Dd
\,\cdot\, \delta_{0:jk}\big)$. Since $j,\, k \leq n-1$, we have $d_j,\, d_k >
0$, so we can apply Lemma \ref{lem:pushforward_Dd} to find that $b_{0:jk} =
-\binom{d_j + d_k + 1}{2}$. Similarly, in order to compute $b_{0:S}$ for $S
\subseteq [n-1]$, we can intersect with one divisor $\delta_{0:jk}$ with
$j,\, k \in S$ at a time and push down via the appropriate forgetful maps; by
inductively reasoning as before we find
\begin{equation*}
b_{0:S} = -\binom{d_S + 1}{2} \qquad \text{for } S \subseteq [n - 1].
\end{equation*}

Looking at Lemma \ref{lem:pushforward_forgetful} and using a simple induction
again, we see that when we successively let all of the points $x_1,\, \dots,\,
x_{n-1}$ come together and push down via the appropriate forgetful maps, the
divisor $\delta_{i:\emptyset}$ is mapped to $\delta_{i:\emptyset} =
\delta_{g-i:12}$ on $\Mgbar{2}$, so by Lemma \ref{lem:pushforward_Dd} and
Proposition \ref{prop:case_n=2} again we see that
\begin{equation*}
b_{i:\emptyset} = -\binom{i + 1}{2} \qquad \text{for } i \geq 1.
\end{equation*}

Next, using the test family from Lemma \ref{lem:family_F2_i_2pos} we get that
\begin{equation*}
b_{i:j} = (2i - 1) c_j + b_{i:\emptyset} - i (d_j^2 - 1) - (i - d_j)_+ =
-\binom{\abs{d_j - i} + 1}{2},
\end{equation*}
for $j \in [n - 1]$, and using a pushdown argument once again we arrive at
\begin{equation*}
b_{i:S} = -\binom{\abs{d_S - i} + 1}{2} \qquad \text{for } S \subseteq [n - 1]
\text{ and } i \geq 1.
\end{equation*}

Finally, the fact $b_0 = 0$ follows again from letting all of the points
$x_1,\, \dots,\, x_{n-1}$ coalesce, pushing down to $\Mgbar{2}$ and recurring
to Proposition \ref{prop:case_n=2}.
\end{proof}

\begin{remark}
\label{rmk:logan_divisor}
If $b = 1$, we expect $\Dd$ to be the pullback to $\Mgnbar$ of the divisor
\begin{equation*}
D = \left\{ \left[ C; x_1,\, \dots,\, x_{n-1} \right] \,\Big|\, h^0 \left(
C,\, d_1 x_1 + \dots + d_{n-1} x_{n-1} \right) \geq 2 \right\}
\end{equation*}
which was considered by Logan \cite{bib:logan}. Indeed, we have for $S
\subseteq [n-1]$ that
\begin{equation*}
b_{i:S \cup \{ n \}} = b_{g-i:[n-1] \setminus S} = -\binom{\abs{g - d_S -
g + i} + 1}{2} = b_{i:S},
\end{equation*}
and moreover $c_n = 0$ and $b_{0:jn} = -c_j$ for $j \in [n-1]$. Lemma
\ref{lem:pullback_forgetful} thus shows that
\begin{equation*}
\left[ D \right] = -\lambda + \sum_{j=1}^{n-1} \psi_j - 0 \cdot \delta_0 -
\binom{\abs{d_S - i} + 1}{2} \delta_{i:S},
\end{equation*}
which is consistent with the computations in \cite{bib:logan}.
\end{remark}

\subsection{The general case}
We will now finally deal with the most general case where there are at least
two $d_j$ of either sign, thereby proving formula \eqref{eq:class_Dd}. We
exclude the degenerate case where some $d_j$ equals $0$, since in this case
the divisor $\Dd$ is just a pullback of some $\D_{\d'}$ from some moduli space
with fewer marked points, so its class can easily be computed from Theorem
\ref{thm:class_Dd} with the help of the formulas in Lemma
\ref{lem:pullback_forgetful}.

\begin{theorem}
\label{thm:class_Dd}
The class of $\Dd$ in $\Pic(\Mgnbar)$ is given by
\begin{equation*}
\begin{split}
\left[ \Dd \right] =& -\lambda + \sum_{j=1}^n \binom{d_j + 1}{2} \psi_j -
0 \cdot \delta_0 \\
& - \sum_{\substack{i,\, S\\ S \subseteq S_+}}
\binom{\abs{d_S - i} + 1}{2} \delta_{i:S} - \sum_{\substack{i,\, S\\ S
\not\subseteq S_+}} \binom{d_S - i + 1}{2} \delta_{i:S}.
\end{split}
\end{equation*}
\end{theorem}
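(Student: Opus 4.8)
The plan is to determine the boundary coefficients $b_0$, $b_{i:S}$ in the expansion $\eqref{eq:D_coefficients}$, since the main coefficients $a = -1$ and $c_j = \binom{d_j+1}{2}$ are already established in Proposition \ref{prop:main_coefficients}. The strategy is a bootstrapping induction on the number of negative entries of $\d$, with Proposition \ref{prop:case_one_negative_dj} serving as the base case. The crucial tool is the pushdown argument of Remark \ref{rmk:pushdown}: whenever $d_j$ and $d_k$ share a sign, intersecting $\Dd$ with $\delta_{0:jk}$ and pushing forward via $\pi_{(jk \mapsto \bullet)}$ recovers the class of $\D_{\d'}$, where $\d'$ merges the two indices. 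By the injectivity remark following Lemma \ref{lem:pushforward_forgetful}, a boundary coefficient of $\Dd$ can thus be read off directly from the corresponding coefficient of $\D_{\d'}$, reducing the number of marked points one step at a time.

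First I would handle the coefficients $b_{i:S}$ with $S \subseteq S_+$, i.e. where all points indexed by $S$ carry positive $d_j$. For such $S$ I can repeatedly coalesce the positive points inside $S$ (and separately the positive points outside) using pushdowns, reducing to a configuration with fewer positive indices but the same number of negative ones. The induction hypothesis then delivers $b_{i:S} = -\binom{\abs{d_S - i} + 1}{2}$, exactly as in the first sum of the claimed formula, and the absolute value survives because merging positive points never changes the sign structure of the negative part. The coefficient $b_0 = 0$ and the $b_{0:S}$ follow by the same collapsing procedure, recurring ultimately to Proposition \ref{prop:case_n=2}.

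The substantive new case is $S \not\subseteq S_+$, where $S$ contains at least one negative index; this is where the formula switches from $\binom{\abs{d_S - i}+1}{2}$ to $\binom{d_S - i + 1}{2}$ without the absolute value. Here the pushdown alone is insufficient, since I cannot merge a positive with a negative index, so I would anchor the computation on the genuine test-curve intersections computed in Section \ref{sec:test_curves}. Concretely, I would use the family of Lemma \ref{lem:family_F_13_on_i}, whose members place one positive and one negative point together on a genus-$i$ component; intersecting $\eqref{eq:D_coefficients}$ against it and solving for $\delta_{i:13}$ (after substituting the already-known $a$, $c_j$, and the $S \subseteq S_+$ coefficients) isolates $b_{i:\{1,3\}}$. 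The computed value $F \cdot \Dd = i(d_1 + d_3 - i + 1)^2 - (i - d_1)_+$ is engineered so that, once the other terms are subtracted, the $(i-d_1)_+$ correction cancels and leaves precisely $-\binom{d_S - i + 1}{2}$ with $S = \{1,3\}$ and $d_S = d_1 + d_3$. I would then extend from this seed to arbitrary mixed $S$ by pushdowns that merge additional same-sign points into the component, reducing to the $\abs{S} = 2$ mixed case.

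The main obstacle I expect is bookkeeping the sign conventions and the $(\cdot)_+$ corrections so that they cancel cleanly, together with verifying that the two sums in the final formula are mutually consistent on divisors $\delta_{i:S}$ that can be reached by more than one pushdown route. In particular I must check that for a mixed $S$ the quantity $d_S - i$ has a definite sign forced by the limit-linear-series geometry (so that dropping the absolute value is legitimate), which is really a reflection of the fact that on such boundary strata the negative points sit on a single component and the relevant vanishing sequence admits no competing aspect. Confirming this compatibility — rather than any single test-curve computation — is the delicate point, and I would verify it by cross-checking the coefficients obtained from Lemma \ref{lem:family_F_13_on_i} against those forced by the pushdown relations of Lemma \ref{lem:pushforward_Dd}.
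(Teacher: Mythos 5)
Your overall architecture matches the paper's: main coefficients from Proposition \ref{prop:main_coefficients}, the coefficients with $S \subseteq S_+$ (and $b_0$) via the pushdown of Remark \ref{rmk:pushdown} to Proposition \ref{prop:case_one_negative_dj}, and the genuinely mixed boundary coefficients anchored on the test family of Lemma \ref{lem:family_F_13_on_i}. However, there is a genuine gap: you have skipped the case $\emptyset \neq S \subsetneq S_-$, i.e.\ boundary divisors where \emph{all} positive points lie on one component but the negative points are split. Your proposed reduction --- ``extend from this seed to arbitrary mixed $S$ by pushdowns \dots reducing to the $\abs{S}=2$ mixed case'' --- cannot reach these classes, because the pushdown of Lemma \ref{lem:pushforward_Dd} only merges indices $j,k$ with $d_j$, $d_k$ of the \emph{same} sign. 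A purely negative $S$ therefore only ever collapses to the $n=3$ configuration $\d = (d_1;\, d_2,\, d_3)$ with $d_1 > 0$, $d_2,\, d_3 < 0$ and $S = \{2\}$ (equivalently, in the dual representation, to the mixed pair $\{1,3\}$ but with only \emph{one} positive point present), and Lemma \ref{lem:family_F_13_on_i} does not apply there: that family lives in $\Mgbar{4}$ and requires two positive entries $d_1,\, d_2 > 0$. The paper closes exactly this case with a separate test-curve computation, Lemma \ref{lem:family_F2_i_2neg} (a negative point moving on a genus-$i$ tail), yielding $b_{i:S} = -\binom{d_S - i + 1}{2}$ for $\emptyset \neq S \subsetneq S_-$.

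Moreover, this missing case is not optional even for the step you do carry out: the family $F$ of Lemma \ref{lem:family_F_13_on_i} satisfies $F \cdot \delta_{i:1} = F \cdot \delta_{i:3} = 1$, so the linear equation it produces is
\begin{equation*}
F \cdot \Dd = c_1 + c_3 - 2i\, b_{i:13} + b_{i:1} + b_{i:3},
\end{equation*}
which involves \emph{two} coefficients of the second sum, $b_{i:13}$ and $b_{i:3}$. You can solve for $b_{i:13}$ only after $b_{i:3}$ (a purely negative singleton, precisely the case you omitted) is already known; with your inputs alone the equation has two unknowns, and your plan to derive the purely negative coefficients \emph{from} the mixed ones is circular. (Your cancellation remark is otherwise correct: $(i-d_1)_+$ does combine with $b_{i:1} = -\binom{\abs{d_1 - i}+1}{2}$ to give $-\binom{d_1 - i + 1}{2}$.) Finally, two smaller points: Lemma \ref{lem:family_F_13_on_i} requires $i \geq 1$, and the $i = 0$ mixed coefficient must be recovered from the invariance of $\binom{d_S - i + 1}{2}$ under $(i,\, S) \mapsto (g-i,\, S^c)$, as the paper notes; and your worry that $d_S - i$ must have ``a definite sign'' for mixed $S$ is misplaced --- the formula without absolute value is simply self-dual under complementation, using the convention $\binom{a}{2} = a(a-1)/2$ for $a < 0$, so no sign needs to be forced.
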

\begin{proof}
From Section \ref{sec:main_coefficients} we know that $a = -1$ and $c_j =
\binom{d_j + 1}{2}$. Using the by now familiar pushdown technique, we get from
Proposition \ref{prop:case_one_negative_dj} that $b_0 = 0$ and
\begin{equation*}
b_{i:S} = -\binom{\abs{d_S - i} + 1}{2} \qquad \text{for } S \subseteq S_+.
\end{equation*}
Thus we are left with computing the $b_{i:S}$ where the points indexed by
$S_-$ do not all lie on the same component.

Suppose first that $\emptyset \neq S \subsetneq S_-$. By letting the points
from $S_+$, $S$ and $S_- \setminus S$ respectively come together, we can
reduce to the case $n = 3$ with $d_1 = d_{S_+} > 0$, $d_2 = d_S < 0$ and $d_3
= d_{S_- \setminus S} < 0$. The divisor $\delta_{i:S}$ is mapped to
$-\psi_2$ for $i = 0$ and to $\delta_{i:2}$ for $i > 0$. We know that $c_2 =
\binom{d_2 + 1}{2}$, while for $i > 0$ we get from Lemma
\ref{lem:family_F2_i_2neg} that
\begin{equation*}
b_{i:2} = (2i - 1) \binom{d_2 + 1}{2} + b_{i:\emptyset} - i(d_2^2 - 1) =
-\binom{d_2 - i + 1}{2}.
\end{equation*}
Thus in total we deduce by Lemma \ref{lem:pushforward_Dd} that
\begin{equation*}
b_{i:S} = -\binom{d_S - i + 1}{2} \qquad \text{for } \emptyset \neq S
\subsetneq S_-.
\end{equation*}

Finally, let $S = S_1 \cup S_2$ with $\emptyset \neq S_1 \subsetneq S_+$ and
$\emptyset \neq S_2 \subsetneq S_-$. Letting the points from $S_1$, $S_+
\setminus S_1$, $S_2$ and $S_- \setminus S_2$ respectively come together, we
reduce to the computation of $b_{i:13}$ in the case $n = 4$. Taking the family
from Lemma \ref{lem:family_F_13_on_i} we find
\begin{equation*}
\begin{split}
b_{i:13} &= \frac{1}{2i} \left( c_1 + c_3 + b_{i:1} + b_{i:3} - i (d_1 + d_3 -
i + 1)^2 + (i - d_1)_+ \right)\\
&= -\binom{d_1 + d_3 - i + 1}{2}.
\end{split}
\end{equation*}
Note that although in Lemma \ref{lem:family_F_13_on_i} we require $i \geq 1$,
the above formula is invariant under the substitution $(i,\, d_1,\, d_3)
\mapsto (g - i,\, d_2,\, d_4)$, so it holds also for $i = 0$. Thus in total
we get
\begin{equation*}
b_{i:S} = -\binom{d_S - i + 1}{2} \qquad \text{for } S = S_+ \cup S_- \text{
with } \emptyset \neq S_1 \subsetneq S_+ \text{ and } \emptyset \neq S_2
\subsetneq S_-,
\end{equation*}
which finishes the computation of $\left[ \Dd \right]$.
\end{proof}

\bibliographystyle{amsalpha}
\bibliography{paper.bib}
\end{document}